\documentclass[12pt,reqno]{article}
\usepackage{diagbox}
\usepackage{amsfonts}
\usepackage{booktabs}
\usepackage{bbm}
\usepackage{bm}
\usepackage{pifont}
\usepackage{mathptmx}
\usepackage{amsmath}
\usepackage{mathrsfs}
\usepackage{hyperref}
\usepackage{amssymb}
\usepackage{amsmath,color}
\usepackage{amsthm}
\usepackage{amstext}
\usepackage[numbers,sort&compress]{natbib}
\usepackage{amsopn}
\usepackage{texdraw}
\usepackage{graphicx}
\usepackage{multirow}
\usepackage{lscape}
\usepackage{float}
\usepackage[palatino,gill,courier]{altfont}
\usepackage[left=3cm,right=3cm,top=3cm,bottom=3cm]{geometry}
\usepackage{esint}
\usepackage{setspace}

\usepackage[english]{babel}
\usepackage{mathrsfs,amsmath}
\usepackage{color}
\usepackage{framed}
\allowdisplaybreaks

\usepackage{color} 
\usepackage{enumerate} 
\newtheorem{theorem}{Theorem}[section] 

\newtheorem{remark}[theorem]{Remark}

\let\Section=\section
\def\section{\setcounter{equation}{0}\Section}
\title{Asymmetric fractional coupled magnetizable piezoelectric beams with infinite viscoelastic memory: polynomial decay and sharpness of the decay rate}
\author{Jun Zhou\footnote{Corresponding author. Email: jzhou@swu.edu.cn} ~~~~~~ Min Wang\footnote{Email: 1909085317@qq.com}\\\\{School of Mathematics and Statistics, Southwest University},\\
	{Chongqing 400715, People's Republic of China}}
\date{}
\begin{document}
	\maketitle
	\begin{abstract}
		This paper studies the long-time dynamics of a coupled hyperbolic system for magnetizable piezoelectric beams with infinite viscoelastic memory. Memory dissipation is characterized by $A^\alpha$, the fractional power of a positive self-adjoint operator $A$ with $\alpha\in[0,1)$. An asymmetric fractional magnetoelectric coupling is adopted: the mechanical-to-magnetic coupling uses integer-order operator $A$, while the magnetic feedback to mechanics is governed by fractional operator $A^\beta$ ($\beta\in[0,1)$). This model bridges the gap between the well-studied integer coupling case ($\beta=1$) and the unsolved fully fractional symmetric coupling problem, offering a universal framework for related coupled systems.

Under mild assumptions on memory kernels and system parameters, we prove well-posedness via semigroup theory and derive an explicit polynomial decay estimate for smooth initial data:
\[
\|X(t)\|_{\mathcal H} \le C t^{-\frac{1}{4-2\beta-2\alpha}}\|X_0\|_{D(\mathcal A)},\quad \forall\,t\ge 1,
\]
where the decay exponent is explicitly determined by $\alpha$ and $\beta$. For exponentially decaying memory kernels, the decay rate is sharp if stiffness coefficients satisfy $\alpha_1\ne\alpha_2$. When $\alpha_1=\alpha_2$, we only obtain an upper bound $\delta\le \frac{1}{3-\beta-2\alpha}$ for decay index $\delta$, leaving the optimal rate open.

Comparisons with integer feedback coupling ($\beta=1$) show fractional feedback ($\beta<1$) slows energy decay. It demonstrates that $\beta$ weakens indirect damping and degrades structural stabilization. The results reveal the intrinsic interaction between fractional memory dissipation and fractional coupling in strongly coupled dissipative systems.
		\\\\
		\textbf{Keywords}: Magnetizable piezoelectric system; Infinite memory; Fractional operator; Polynomial decay; Optimality; Coupled hyperbolic system.     \\\\
\noindent \textbf{Mathematics Subject Classification:} 35B40, 93D20, 35Q74, 74D05, 47D06.
	\end{abstract}
	\section{Introduction}\label{sec1}
The piezoelectric effect refers to a physical phenomenon that realizes the coupling between mechanical stress and electrical signals. Based on this fundamental property, piezoelectric beams enable the bidirectional conversion between mechanical and electrical energy. As a typical category of intelligent functional structures, piezoelectric beams have been extensively applied to mechanical engineering, biomedical devices and industrial manufacturing, attracting considerable research attention across materials science and related engineering disciplines \cite{Dagdeviren2016,s18113973,9164928}.
Models of magnetizable piezoelectric beams serve as canonical dynamic systems to describe the multi-physical coupling behaviors among mechanical, electric and magnetic fields. Along with the rapid advancement of magnetoelectric coupling research, numerous efforts have been devoted to the mathematical modeling and stability analysis of magnetizable piezoelectric beam systems. In pioneering studies, Morris and \"Ozer \cite{Morris2013,Morris2014,Ozer2015} developed a comprehensive fully dynamic governing system for such beams, which is formulated as follows:
\begin{equation}\label{morris}
			\begin{cases}
				\rho v_{tt}(x,t) = \alpha v_{xx}(x,t) - \gamma\beta p_{xx}(x,t), & x\in(0,L),\ t>0, \\[4pt]
				\mu p_{tt}(x,t) = \beta p_{xx}(x,t) - \gamma\beta v_{xx}(x,t), & x\in(0,L),\ t>0, \\[4pt]
				v(0,t) = p(0,t) = 0, & t>0, \\[4pt]
				\alpha v_x(L,t) - \gamma\beta p_x(L,t) = 0, & t>0, \\[4pt]
				\beta p_x(L,t) - \gamma\beta v_x(L,t) = -\dfrac{V(t)}{h}, & t>0.
			\end{cases}
		\end{equation}
The physical interpretations of all variables and parameters involved in Eq. \eqref{morris} are specified in Table \ref{table1}.
\begin{table}[htbp]\label{table1}
  \centering
  \caption{Nomenclature for the magnetizable piezoelectric beam model}
  \label{tab:nomenclature}
  \begin{tabular}{ll}
    \toprule
    Symbol & Physical Interpretation \\
    \midrule
    $x$, $t$ & Spatial coordinate along the beam length and time variable, respectively \\
    $L$ & Total length of the magnetizable piezoelectric beam \\
    $v(x,t)$ & Transverse mechanical displacement of the beam at position $x$ and time $t$ \\
    $p(x,t)$ & Magnetic potential function induced by the magnetoelectric coupling effect \\
    $\rho$ & Linear mass density of the piezoelectric beam material \\
    $\mu$ & Magnetic permeability coefficient associated with magnetic field variations \\
    $\alpha$ & Structural stiffness coefficient of the beam, reflecting mechanical elastic properties \\
    $\beta$ & Dielectric constant that characterizes the electrical response of piezoelectric materials \\
    $\gamma$ & Magnetoelectric coupling coefficient bridging mechanical, electric and magnetic fields \\
    $h$ & Equivalent thickness of the piezoelectric beam \\
    $V(t)$ & External applied voltage loaded at the free end of the beam \\
    \bottomrule
  \end{tabular}
\end{table}
In their research, the authors established the coupled dynamic model via the variational principle. Furthermore, they enhanced the structural stability of the system by adopting electric feedback control strategies, and demonstrated that current-driven actuation possesses prominent merits regarding the boundedness of control operators. Subsequent scholars have further enriched relevant theories from diverse perspectives. Ramos et al. \cite{MR3808160} studied one-dimensional dissipative piezoelectric beams with magnetic effects and verified that damping ensures exponential stability. Messaoudi et al. \cite{MR4853207} derived parameter-independent universal energy decay estimates for fully dynamic electrostatic piezoelectric beams incorporating magnetic effects and viscoelastic damping. Douib et al. \cite{MR4831270} explored the dynamic characteristics of magnetizable piezoelectric beams with time-delay perturbations and proved the exponential stabilization of the considered system. Loucif et al. \cite{MR4970606} integrated magnetizable piezoelectric beam equations with the Green-Naghdi type III thermoelastic model, and established a complete exponential stability framework for time-delay coupled multi-physical systems. Al-Gharabli \cite{MR4901041} studied piezoelectric beams with infinite memory and nonlinear frictional feedback, establishing general decay rates without parameter constraints.
	
Now we introduce some results on some abstract magnetizable piezoelectric systems. Let $H$ denote a complex Hilbert space equipped with inner product $\langle\cdot,\cdot\rangle$, which induces the associated norm $\|\cdot\|$. Suppose $A:D(A)\to H$ is a strictly positive, self-adjoint linear operator on $H$ admitting a compact resolvent, with $D(A)$ denoting the domain of $A$. The spectrum of $A$ consists solely of isolated eigenvalues $\{\gamma_n\}_{n\in\mathbb{N}}$ obeying
\[
0 < \gamma_1 < \gamma_2 < \cdots < \gamma_n < \gamma_{n+1} < \cdots,\qquad \lim_{n\to\infty}\gamma_n = \infty.
\]
For each $n\in\mathbb{N}$, let $e_n\in D(A)$ stand for the normalized eigenvector associated to $\gamma_n$, satisfying $\|e_n\|=1$ and $A e_n = \gamma_n e_n$. The sequence $\{e_n\}_{n=1}^\infty$ constitutes an orthonormal basis of $H$.

Set $D(A^0)\triangleq H$. For any $\alpha\in(0,1]$, we define
\begin{equation}\label{DAalpha}
D(A^{\alpha/2})\triangleq\left\{\phi\in H\,\bigg|\,\|A^{\alpha/2}\phi\|<\infty\right\},
\end{equation}
where the fractional power $A^{\alpha/2}$ acts on arbitrary $\phi\in H$ via the series expansion
\[
A^{\alpha/2}\phi=\sum_{n=1}^\infty\gamma_n^{\alpha/2}\langle\phi,e_n\rangle e_n,
\]
with the corresponding norm identity
\[
\|A^{\alpha/2}\phi\|^2=\sum_{n=1}^\infty\gamma_n^\alpha\bigl|\langle\phi,e_n\rangle\bigr|^2.
\]
The space $D(A^{\alpha/2})$ is readily seen to be a Hilbert space under the inner product
\begin{equation}\label{innerDAalpha}
\langle\phi_1,\phi_2\rangle_{D(A^{\alpha/2})}\triangleq\left\langle A^{\alpha/2}\phi_1,A^{\alpha/2}\phi_2\right\rangle,\qquad
\forall\,\phi_1,\phi_2\in D(A^{\alpha/2}).
\end{equation}

Zhang et al. \cite{MR4450079} studied a multi-dimensional nonlinear piezoelectric beam with viscoelastic infinite memory defined on a bounded smooth domain $\Omega \subset \mathbb{R}^n$ with piecewise smooth boundary $\partial\Omega = \Gamma_0 \cup \Gamma_1$, where $\Gamma_0$ and $\Gamma_1$ are disjoint boundary components. The governing coupled hyperbolic partial differential equations read
\begin{equation}\label{ZAMP-original}
\begin{cases}
\rho v_{tt}(x,t) = \alpha \Delta v(x,t) - \gamma\beta \Delta p(x,t) + f_1(v,p) - \displaystyle\int_{0}^{\infty} g(s)\Delta v(x,t-s)\,\mathrm{d}s, & x\in\Omega,\ t>0,\\
\mu p_{tt}(x,t) = \beta \Delta p(x,t) - \gamma\beta \Delta v(x,t) + f_2(v,p), & x\in\Omega,\ t>0,\\
v(x,t) = p(x,t) = 0, & x\in\Gamma_0,\ t>0,\\
\alpha \dfrac{\partial v}{\partial \mathbf{n}}(x,t) - \gamma\beta \dfrac{\partial p}{\partial \mathbf{n}}(x,t) = \beta \dfrac{\partial p}{\partial \mathbf{n}}(x,t) - \gamma\beta \dfrac{\partial v}{\partial \mathbf{n}}(x,t) = 0, & x\in\Gamma_1,\ t>0,\\
v(x,0) = v_0(x),\ v_t(x,0) = v_1(x),\ p(x,0) = p_0(x),\ p_t(x,0) = p_1(x), & x\in\Omega,\\
v(x,-s) = h(x,s), & x\in\Omega,\ s>0,
\end{cases}
\end{equation}
where $v(x,t)$ is transverse displacement, $p(x,t)$ magnetic potential, $h(s)$ memory prehistory, and $f_1,f_2$ locally Lipschitz nonlinear source terms, $g(s)$ integrable viscoelastic kernel satisfying the following assumptions:
 \begin{description}
		\item[(A1)]
	The memory kernel $g\in L^1(\mathbb{R}_+)\cap H^1(\mathbb{R}_+)$ satisfy $0<\zeta:=\int_0^\infty g(s)\mathrm{d}s<\infty$ and $g(s)>0$ for all $s\in\mathbb{R}_+$. In addition, $g'(s)<0$ holds almost everywhere on $\mathbb{R}_+$, and there exist positive constants $k_0,k_1$ such that
		\[
		-k_0 g(s)\leq g'(s)\leq -k_1 g(s).
		\]
		\end{description}
This model uses integer Laplace operators with infinite viscoelastic memory solely acting on the mechanical equation; the magnetic equation achieves indirect stabilization through magnetoelectric coupling. The authors converted the PDE system to an abstract Hilbert-space evolution equation. Using the L\"umer-Phillips theorem, they verified the linear operator generates a contractive $C_0$-semigroup, and applied the Banach fixed-point theorem to obtain global well-posedness for small initial data. Frequency-domain resolvent estimates together with energy multipliers yielded uniform exponential energy decay. At the end of the paper, the authors put forward several open research problems, one of them is to investigate the large-time stability and quantitative energy decay rates for the following abstract fractional coupled magnetizable piezoelectric system with fractional memory and fractional magnetoelectric coupling:
\begin{equation}\label{open-model}
\begin{cases}
v_{tt}(t) + a_1 A v(t) - \kappa A^\beta p(t) - \displaystyle\int_{0}^{\infty} g(s) A^\alpha v(t-s)\,\mathrm{d}s = 0,\\
p_{tt}(t) + a_2 A p(t) - \kappa A^\beta v(t) = 0,\quad \alpha,\beta\in[0,1).
\end{cases}
\end{equation} This abstract system generalizes the linearized version of the original PDE model \eqref{ZAMP-original}: if we set $\alpha=\beta=1$ and take $A=-\Delta$, then \eqref{open-model} exactly recovers the linear part of the concrete multi-dimensional piezoelectric beam model studied in \cite{MR4450079}. The fractional exponent $\alpha$ governs the viscoelastic memory dissipation behavior of mechanical displacement, while $\beta$ characterizes the strength of fractional-order magnetoelectric cross-coupling between the mechanical and magnetic components. The memory integral $\int_0^\infty g(s) A^\alpha v(t-s)\mathrm{d}s$ is called \lq\lq viscoelastic infinite memory\rq\rq, and many researchers have investigated problems concerning this type of memory term. For example, Wang et al. \cite{MR3109859} investigated a wave equation with viscoelastic infinite memory and proved the exponential stability of the system via spectral analysis and the Riesz basis method. Guesmia et al. \cite{MR4125294} established improved general decay rates for two types of viscoelastic wave equations with infinite memory under more general assumptions on the kernel functions.
The authors of \cite{MR4450079} pointed out that both fractional parameters $\alpha$ and $\beta$ jointly modulate the overall dissipative mechanism of the coupled hyperbolic system, yet the precise quantitative relationship between $(\alpha,\beta)$ and the sharp energy decay rate of system solutions remains completely unresolved. The core target of this open problem is to rigorously characterize how the two independent fractional orders $\alpha,\beta\in[0,1)$ jointly determine the optimal decay speed of solutions to the abstract coupled evolution system \eqref{open-model}.

To partially address this unresolved open problem, Zhang et al. \cite{MR4939108} carried out a follow-up study. Their work considered the special case where the magnetoelectric fractional order is fixed as $\beta=1$, while retaining fractional viscoelastic memory $A^\alpha$ with $\alpha\in[0,1)$. The abstract linear system analyzed in their paper reads
\begin{equation}\label{ESAIM-model}
\begin{cases}
\rho v_{tt}(t)=-\alpha Av(t)+\gamma\beta Ap(t)+\displaystyle\int_{0}^{\infty}g(s)A^\alpha v(t-s)\,\mathrm{d}s, & t>0,\\
\mu p_{tt}(t)=-\beta Ap(t)+\gamma\beta Av(t), & t>0,\\
v(0)=v_0,\ v_t(0)=v_1,\ p(0)=p_0,\ p_t(0)=p_1,\\
v(-s)=h(s), & s>0,
\end{cases}
\end{equation}
where the memory kernel $g(s)$ satisfies identical integrability and monotonicity assumptions (A1) and $\alpha>\gamma_1^2\beta$. Following the approach developed in Dafermos \cite{Dafermos1970} and Fabrizio et al. \cite{MR2679371}, the authors reformulated the system into an extended state space incorporating the memory history variable $\eta(t,s)=v(t)-v(t-s)$, then proved the associated operator generates a contractive $C_0$-semigroup via L\"umer-Phillips arguments, establishing full well-posedness on the weighted Hilbert space. Using refined frequency-domain resolvent bounds along spectral asymptotic analysis, they rigorously derived an explicit, polynomial optimal decay rate $t^{-1/(2-2\alpha)}$ solely controlled by the memory fractional index $\alpha$, and further validated the sharpness of this decay estimate through detailed eigenvalue asymptotic expansions under exponential-type memory kernels. Furthermore, for more models with fractional memory terms, Oliveira et al. \cite{MR4663194,MR4991955} studied a wave-plate coupled system subject to fractional memory dissipation, established that the polynomial decay rates depend explicitly on the fractional order parameters, and proved the optimality of the obtained decay rates. Nevertheless, this follow-up contribution of \cite{MR4939108} only resolves the simplified scenario with integer-order magnetoelectric coupling $\beta=1$, leaving the general case with sub-unit fractional coupling $\beta\in[0,1)$ entirely unaddressed, which motivates our current investigation.

To fill this theoretical gap and provide a seamless transition from the fully integer-coupled case in \cite{MR4939108} toward the fully fractional symmetric system proposed in the open problem \eqref{open-model}, we investigate an intermediate asymmetric fractional coupled magnetizable piezoelectric system in the present work. Our model adopts a hierarchical fractional coupling setting: the coupling term acting on the mechanical displacement equation adopts integer-order operator $A$ (i.e., $\beta=1$), while the feedback coupling term on the magnetic potential equation retains general fractional operator $A^\beta$ with $\beta\in[0,1)$. The complete abstract evolution system with infinite viscoelastic memory and initial/pre-history data is given by
\begin{equation}\label{model}
		\begin{cases}
			\displaystyle v_{tt}(t) = -\alpha_1 A v(t) + k A p(t) + \int_0^\infty g(s) A^\alpha v(t-s) \,\mathrm{d}s, & t>0, \\
			p_{tt}(t) = -\alpha_2 A p(t) + k A^\beta v(t), & t>0 ,\\
			v(-s) = h(s), & s>0, \\
			v(0)=v_0:=\lim_{s\to0^+}h(s),\ v_t(0)=v_1,\ p(0)=p_0,\ p_t(0)=p_1,
		\end{cases}
	\end{equation}
where $\alpha\in[0,1)$ stands for the fractional order of viscoelastic memory dissipation, and $\beta\in[0,1)$ describes the fractional magnetoelectric feedback coupling acting on the magnetic potential component, we make the following assumptions on the parameters:
 \begin{description}
\item[(A2)]
		Assume that constants $\alpha\in[0,1)$, $\beta\in[0,1)$, $\alpha_1>\zeta\gamma_1^{\alpha-1}$, $\alpha_2>0$ and $k>0$ satisfy
		\[
		\alpha_2\big(\alpha_1 - \zeta \gamma_1^{\alpha-1}\big)
		> k^2\big(1 + \gamma_1^{\beta-1}\big)^2.
		\]
 \end{description}

 Compared with the symmetric fully fractional system \eqref{open-model} proposed as an open problem in \cite{MR4450079}, our asymmetric structure constitutes a natural transitional model: it retains the simplified integer coupling from mechanical to magnetic field analyzed in \cite{MR4939108}, while introducing the sub-unit fractional coupling $\beta\in[0,1)$ from magnetic potential back to mechanical displacement, which allows us to separately quantify how the fractional feedback term $A^\beta$ modulates the overall energy decay rate without the interference of two-way fractional cross-coupling. This asymmetric setup bridges the existing literature and the unsolved general open problem, enabling us to rigorously characterize the individual contribution of the fractional coupling order $\beta$ to the stabilization performance of magnetizable piezoelectric systems with fractional viscoelastic memory.

\noindent\textbf{Summary of our main results.} In this work, we systematically investigate the asymmetric fractional coupled system \eqref{model}. Our main contributions are threefold. First, under assumptions (A1) and (A2), we establish the well-posedness of the system via semigroup theory (Theorem \ref{thmwell}), proving that the associated operator generates a contractive \(C_0\)-semigroup on the extended Hilbert space \(\mathcal{H}\). Second, by means of refined frequency-domain resolvent estimates and multiplier techniques, we derive an explicit polynomial decay rate for smooth initial data:
\[
\|X(t)\|_{\mathcal{H}} \le C t^{-\frac{1}{4-2\beta-2\alpha}} \|X_0\|_{D(\mathcal{A})}, \qquad t\ge 1,
\]
where the decay exponent explicitly depends on both the memory fractional order \(\alpha\in[0,1)\) and the magnetoelectric coupling fractional order \(\beta\in[0,1)\) (Theorem \ref{thmdecay}). Third, for exponentially decaying memory kernels, we prove that the above decay rate is sharp whenever the stiffness coefficients satisfy \(\alpha_1\ne \alpha_2\) (Theorem \ref{thmoptimaldecay}); however, when \(\alpha_1=\alpha_2\), our analysis only yields an upper bound \(\delta \le 1/(3-\beta-2\alpha)\) for the possible decay index $\delta$, leaving the exact optimal rate as an open problem (Remark \ref{rem-sharpness}).

\medskip
\noindent\textbf{Comparison with the prior work \cite{MR4939108}.}
A direct comparison with the preceding study highlights the crucial role played by the fractional coupling order \(\beta\). In \cite{MR4939108}, the magnetoelectric coupling was restricted to the integer order \(\beta=1\), and the obtained optimal decay rate was \(t^{-1/(2-2\alpha)}\), which corresponds exactly to setting \(\beta=1\) in our general exponent \(1/(4-2\beta-2\alpha)\). For any \(\beta\in[0,1)\), our new exponent satisfies
\[
\frac{1}{4-2\beta-2\alpha} < \frac{1}{2-2\alpha},
\]
since the denominator increases with \(\beta\). Thus, the introduction of the fractional feedback coupling (with \(\beta<1\)) slows down the decay rate compared to the integer-coupling case. This confirms that the fractional order \(\beta\) directly weakens the indirect damping effect transmitted from the mechanical to the magnetic component, thereby deteriorating the overall stabilization performance. Moreover, our sharpness result reveals that when \(\alpha_1\ne\alpha_2\), the decay exponent is indeed optimal, whereas the symmetric case \(\alpha_1=\alpha_2\) exhibits a loss of sharpness, a phenomenon not observed in \cite{MR4939108} because there \(\alpha_1\) and \(\alpha_2\) were effectively distinct due to the absence of symmetry. Therefore, our work not only generalizes the decay rate formula to incorporate \(\beta\), but also uncovers a new subtlety regarding the influence of stiffness coefficients on optimality, which was not addressed in the previous study.
		
The structure of this paper is as follows. In Section \ref{sec2}, we construct the state space and prove the well-posedness of the system. In Section \ref{sec3}, we establish polynomial stability using frequency-domain estimates. In Section \ref{sec4}, we verify the optimality of the decay rate through spectral analysis.
	\section{Well-posedness}\label{sec2}
	
	This section focuses on the well-posedness of the system \eqref{model} by $C_0$-semigroup theory.
	
	First, we introduce the state space $\mathcal{H}$ as follows:
	\[
	\mathcal{H} \triangleq D(A^{1/2}) \times H \times D(A^{1/2}) \times H \times L_g^2\big(\mathbb{R}_+, D(A^{{\alpha}/{2}})\big),
	\]
	where the space $D(A^{{\alpha}/{2}})$ is defined in \eqref{DAalpha} and
	\begin{align*}
		L_g^2\big(\mathbb{R}_+, D(A^{{\alpha}/{2}})\big)\triangleq\left\{\psi:\mathbb{R}_+\mapsto D(A^{{\alpha}/{2}})\bigg|\|\psi\|_{L_g^2\big(\mathbb{R}_+, D(A^{{\alpha}/{2}})\big)}^2=\int_0^\infty g(s)\|A^{\frac\alpha2}\psi(s)\|^2ds\right\}.
	\end{align*}
	It is obvious that $L_g^2\big(\mathbb{R}_+, D(A^{{\alpha}/{2}})\big)$ ($L_g^2$ for short) is also a Hilbert space with inner product
	\[\langle \psi_1,\psi_2\rangle_{L^2_g}\triangleq\int_0^\infty g(s)\langle A^{\frac\alpha 2}\psi_1(s), A^{\frac\alpha 2}\psi_2(s)\rangle,~~~\forall \psi_1,\psi_2\in L^2_g.\]
	The inner product on $\mathcal{H}$ is defined by
	\begin{align}\label{innerH}
		\langle (v,u,p,q,\eta)^\top , (\widetilde{v},\widetilde{u},\widetilde{p},\widetilde{q},\widetilde{\eta})^\top  \rangle_{\mathcal{H}}
		\triangleq& \alpha_1 \langle A^{1/2}v, A^{1/2}\widetilde{v} \rangle - \zeta \langle A^{\frac{\alpha}{2}}v, A^{\frac{\alpha}{2}}\widetilde{v} \rangle + \langle u,\widetilde{u} \rangle \notag\\
		&+ \alpha_2 \langle A^{1/2}p, A^{1/2}\widetilde{p} \rangle -k\operatorname{Re}\langle A^{\frac{1}{2}} v, A^{\frac{1}{2}}\widetilde p\rangle
		-k\operatorname{Re}\langle A^{\frac{\beta}{2}} p, A^{\frac{\beta}{2}} \widetilde v\rangle\notag\\
		&+ \langle q,\widetilde{q} \rangle + \int_0^\infty g(s) \langle A^{\frac{\alpha}{2}}\eta(s), A^{\frac{\alpha}{2}}\widetilde{\eta}(s) \rangle \, ds.
	\end{align}
	Thus, the norm introduced by the inner product is given by
	\begin{align}\label{normH}
		\|(v,u,p,q,\eta)\|_{\mathcal{H}}^2
		\triangleq& \alpha_1 \|A^{1/2}v\|^2 - \zeta\|A^{\frac{\alpha}{2}}v\|^2 + \|u\|^2+ \alpha_2 \|A^{1/2}p\|^2 \notag\\
		&- k \operatorname{Re} \langle A^{\frac{1}{2}} v, A^{\frac{1}{2}} p \rangle
		- k \operatorname{Re} \langle A^{\frac{\beta}{2}} v, A^{\frac{\beta}{2}} p \rangle+ \|q\|^2 + \|\eta\|_{L_g^2}^2.
	\end{align}
	
	Next we show the norm defined in \eqref{normH} is equivalent to the standard norm of $\mathcal{H}$.
	\begin{theorem}\label{thmequinorm}
		There exists two positive constants $\kappa_1$ and $\kappa_2$ independent of $X=(v,u,p,q,\eta)\in\mathcal{H}$ such that
		$$\kappa_1\|X\|_{\rm stan}\le \|X\|_{\mathcal{H}}\le\kappa_2\|X\|_{\rm stan},$$
		where
		$$\|X\|_{\rm stan}^2\triangleq\|A^{1/2}v\|^2+\|u\|^2+\|A^{1/2}p\|^2+\|q\|^2+\|\eta\|_{L^2_g}^2.$$
	\end{theorem}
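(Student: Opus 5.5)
Only the $v$ and $p$ terms need work: $\|u\|^2$, $\|q\|^2$ and $\|\eta\|_{L^2_g}^2$ appear with coefficient one in both \eqref{normH} and $\|X\|_{\rm stan}$. So it suffices to prove two-sided bounds for the quadratic form
\[
Q(v,p)\triangleq \alpha_1\|A^{1/2}v\|^2-\zeta\|A^{\frac\alpha2}v\|^2+\alpha_2\|A^{1/2}p\|^2-k\operatorname{Re}\langle A^{\frac12}v,A^{\frac12}p\rangle-k\operatorname{Re}\langle A^{\frac\beta2}v,A^{\frac\beta2}p\rangle
\]
in terms of $a\triangleq\|A^{1/2}v\|$ and $b\triangleq\|A^{1/2}p\|$.

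The tool is the spectral (Poincaré-type) inequality. For $0\le\theta\le1$ and $\phi\in D(A^{1/2})$, expanding in the eigenbasis $\{e_n\}$ and using $\gamma_n\ge\gamma_1$ gives
\[
\|A^{\theta/2}\phi\|^2=\sum_n\gamma_n^{\theta-1}\gamma_n|\langle\phi,e_n\rangle|^2\le\gamma_1^{\theta-1}\|A^{1/2}\phi\|^2 .
\]
Applying this with $\theta=\alpha$ yields $\zeta\|A^{\alpha/2}v\|^2\le\zeta\gamma_1^{\alpha-1}a^2$. Applying it with $\theta=\beta$, together with Cauchy–Schwarz, yields $|\langle A^{\beta/2}v,A^{\beta/2}p\rangle|\le\gamma_1^{\beta-1}ab$. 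Hence the cross terms are bounded by $k(1+\gamma_1^{\beta-1})ab$.

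\textbf{Upper bound.} We have
\[
Q\le\alpha_1a^2+\alpha_2b^2+k(1+\gamma_1^{\beta-1})ab,
\]
where the $-\zeta$ term is simply dropped. Young's inequality then gives $Q\le C(a^2+b^2)$, and adding the remaining terms gives $\kappa_2$.

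\textbf{Lower bound.} Set $c_1\triangleq\alpha_1-\zeta\gamma_1^{\alpha-1}>0$ and $m\triangleq k(1+\gamma_1^{\beta-1})$. Then
\[
Q\ge c_1a^2-m\,ab+\alpha_2b^2 .
\]
By (A2), $c_1\alpha_2>m^2$, so the symmetric matrix $\begin{pmatrix}c_1&-m/2\\-m/2&\alpha_2\end{pmatrix}$ is positive definite. Indeed, (A2) gives even $4c_1\alpha_2>m^2$, which is more than is needed. Its smallest eigenvalue
\[
\lambda_{\min}=\tfrac12\Big(c_1+\alpha_2-\sqrt{(c_1-\alpha_2)^2+m^2}\Big)>0
\]
therefore gives $Q\ge\lambda_{\min}(a^2+b^2)$. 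Taking $\kappa_1^2=\min\{\lambda_{\min},1\}$ and adding $\|u\|^2+\|q\|^2+\|\eta\|^2_{L^2_g}$ finishes the argument.

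The only delicate point is the lower bound. There the negative memory term $-\zeta\|A^{\alpha/2}v\|^2$ and both coupling terms must be absorbed simultaneously, so the sign condition in (A2) has to be used exactly as shown. The rest is routine. One should also note that $D(A^{1/2})\subset D(A^{\theta/2})$ for $\theta\in[0,1]$, which makes all terms of $Q$ finite on $\mathcal H$.
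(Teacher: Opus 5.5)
Your proof is correct and follows the paper's route. Both arguments use the spectral inequality $\|A^{\theta/2}\phi\|^2\le\gamma_1^{\theta-1}\|A^{1/2}\phi\|^2$ to reduce everything to a quadratic form in $\|A^{1/2}v\|,\|A^{1/2}p\|$. The only difference is the positivity certificate: the paper uses Young's inequality $ab\le\frac{\varepsilon}{2}a^2+\frac{2}{\varepsilon}b^2$ with $\varepsilon$ taken from an interval that (A2) makes nonempty, while you use the smallest eigenvalue of the $2\times2$ matrix. Your version is marginally sharper, since it only needs $m^2<4c_1\alpha_2$; the paper's non-optimal Young constant is what costs the factor $4$ in (A2).
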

	\begin{proof}
		Since by the assumption (A2), the constants $\alpha_1>\zeta\gamma_1^{\alpha-1}$, $\alpha_2>0$ and $k>0$ satisfy
		\(
		\alpha_2\big(\alpha_1 - \zeta \gamma_1^{\alpha-1}\big)
		> k^2\big(1 + \gamma_1^{\beta-1}\big)^2,
		\)
		we can choose a positive constant $\varepsilon$ such that
		\[\dfrac{2k\left(1 + \gamma_1^{\beta-1}\right)}{\alpha_2}<\varepsilon< \dfrac{2\left(\alpha_1 - \zeta \gamma_1^{\alpha-1}\right)}{k\left(1 + \gamma_1^{\beta-1}\right)}.\]
		Then
		\[\delta_1\triangleq\alpha_1 - \zeta \gamma_1^{\alpha-1} - \dfrac{k\varepsilon}{2}
		\left(1 + \gamma_1^{\beta-1}\right),~~~\delta_2\triangleq \alpha_2 - \dfrac{2k}{\varepsilon}
		\left(1 + \gamma_1^{\beta-1}\right) > 0.\]
		Moreover, since $\alpha\le 1$, we get
		\begin{align*}
			\|A^{\frac{\alpha}{2}}v\|^2=\sum_{n=1}^\infty\gamma_n^\alpha|\langle v,e_n\rangle|^2=\sum_{n=1}^\infty\gamma_n^{\alpha-1}\gamma_n|\langle v,e_n\rangle|^2\le\gamma_1^{\alpha-1}\sum_{n=1}^\infty\gamma_n|\langle v,e_n\rangle|^2=\gamma_1^{\alpha-1}\|A^{\frac{1}{2}}v\|^2.
		\end{align*}
		Applying Young's inequality yields
		\begin{align}\label{coer}
			& \alpha_1 \|A^{\frac{1}{2}}v\|^2 - \zeta \|A^{\frac{\alpha}{2}}v\|^2 + \alpha_2 \|A^{\frac{1}{2}}p\|^2
			- k\operatorname{Re}\langle A^{\frac{1}{2}}v, A^{\frac{1}{2}}p\rangle
			- k\operatorname{Re}\langle A^{\frac{\beta}{2}}v, A^{\frac{\beta}{2}}p\rangle \notag\\
			& \ge \alpha_1 \|A^{\frac{1}{2}}v\|^2 - \zeta\gamma_1^{\alpha-1} \|A^{\frac{1}{2}}v\|^2 + \alpha_2 \|A^{\frac{1}{2}}p\|^2  - k\left(\frac{\varepsilon}{2}\|A^{\frac{1}{2}}v\|^2 + \frac{2}{\varepsilon}\|A^{\frac{1}{2}}p\|^2\right)
			- k\left(\frac{\varepsilon}{2}\|A^{\frac{\beta}{2}}v\|^2 + \frac{2}{\varepsilon}\|A^{\frac{\beta}{2}}p\|^2\right)  \notag\\
			& \ge \alpha_1 \|A^{\frac{1}{2}}v\|^2 - \zeta\gamma_1^{\alpha-1} \|A^{\frac{1}{2}}v\|^2 + \alpha_2 \|A^{\frac{1}{2}}p\|^2  - k\frac{\varepsilon}{2}\|A^{\frac{1}{2}}v\|^2 - \frac{2k}{\varepsilon}\|A^{\frac{1}{2}}p\|^2
			- \frac{k\varepsilon}{2}\gamma_1^{\beta-1}\|A^{\frac{1}{2}}v\|^2
			- \frac{2k}{\varepsilon}\gamma_1^{\beta-1}\|A^{\frac{1}{2}}p\|^2  \notag\\
			& = \delta_1\|A^{\frac{1}{2}}v\|^2+ \delta_2\|A^{\frac{1}{2}}p\|^2.
		\end{align}
		Therefore, we get
		\begin{align}\label{dj1}
			\|X\|_{\mathcal{H}}^2=&\alpha_1 \|A^{1/2}v\|^2 - \zeta\|A^{\frac{\alpha}{2}}v\|^2 + \|u\|^2+ \alpha_2 \|A^{1/2}p\|^2 - k \operatorname{Re} \langle A^{\frac{1}{2}} v, A^{\frac{1}{2}} p \rangle
			- k \operatorname{Re} \langle A^{\frac{\beta}{2}} v, A^{\frac{\beta}{2}} p \rangle+ \|q\|^2 + \|\eta\|_{L_g^2}^2\notag\\
			\ge&\delta_1\|A^{\frac{1}{2}}v\|^2+ \delta_2\|A^{\frac{1}{2}}p\|^2+\|u^2\|+\|q\|^2 + \|\eta\|_{L_g^2}^2\notag\\
			\ge&\min\{\delta_1,\delta_2,1\}\|X\|_{\rm stan}^2.
		\end{align}
		
		On the other hand, by Young's inequality, it is obvious that
		\begin{align}\label{dj2}
			\|X\|_{\mathcal{H}}^2=&\alpha_1 \|A^{1/2}v\|^2 - \zeta\|A^{\frac{\alpha}{2}}v\|^2 + \|u\|^2+ \alpha_2 \|A^{1/2}p\|^2 - k \operatorname{Re} \langle A^{\frac{1}{2}} v, A^{\frac{1}{2}} p \rangle
			- k \operatorname{Re} \langle A^{\frac{\beta}{2}} v, A^{\frac{\beta}{2}} p \rangle+ \|q\|^2 + \|\eta\|_{L_g^2}^2\notag\\
			\le&\left(\alpha_1+\frac k2+\frac k2\gamma_1^{\beta-1}\right)\|A^{\frac{1}{2}}v\|^2+ \left(\alpha_2+\frac k2+\frac k2\gamma_1^{\beta-1}\right)\|A^{\frac{1}{2}}p\|^2+\|u^2\|+\|q\|^2 + \|\eta\|_{L_g^2}^2\notag\\
			\le&\max\left\{\alpha_1+\frac k2+\frac k2\gamma_1^{\beta-1},\alpha_2+\frac k2+\frac k2\gamma_1^{\beta-1},1\right\}\|X\|_{\rm stan}^2.
		\end{align}
		By \eqref{dj1} and \eqref{dj2}, we get the conclusion.
	\end{proof}
	
	Next, to transform the non-autonomous system \eqref{model} to an autonomous system we define a new variable $\eta(t,s)$ as
	\[
	\eta(t,s) = v(t) - v(t-s), \quad t,s>0
	\]
	Then, the first equation of the system \eqref{model} may be reformulated as follows:
	$$v_{tt}(t) = -\alpha_1 A v(t) + k A p(t)+ \zeta A^\alpha v(t) - \int_{0}^{\infty} g(s) A^\alpha \eta(s) \,\mathrm{d}s.$$
	Therefore, we can re-written the system \eqref{model} as the following abstract autonomous system in $\mathcal{H}$:
	\begin{equation}\label{modelmain}
		\begin{cases}
			\dfrac{dX(t)}{dt}=\mathcal{A} X(t), & t>0, \\
			X(0)=X_0\triangleq \big(v_0,v_1,p_0,p_1,\eta_0\big)^\top,
		\end{cases}
	\end{equation}
	where $X(t)\triangleq\big(v(t),u(t),p(t),q(t),\eta(t,s)\big)^\top$ with $u(t)\triangleq v_t(t)$ and $q(t)\triangleq p_t(t)$, $\eta_0\triangleq\eta(0,s)=v_0-h(s)$, and $\mathcal{A}:D(\mathcal{A})\subset \mathcal{H}\mapsto\mathcal{H}$ is a linear operator given by
	\begin{equation}\label{dfA}
		\mathcal{A}\begin{pmatrix} v \\ u \\ p \\ q \\ \eta \end{pmatrix}
		\triangleq
		\begin{pmatrix}
			u \\
			-\alpha_1 A v+k Ap+\zeta A^\alpha v-\int_0^\infty g(s) A^\alpha \eta(s) ds \\
			q \\
			-\alpha_2 A p+k A^\beta v \\
			u-\eta_s(s)
		\end{pmatrix},
	\end{equation}
	where $(v,u,p,q,\eta)^\top\in D(\mathcal{A})$ and
	\begin{align*}
D(\mathcal{A})\triangleq&\left\{X=(v,u,p,q,\eta)^\top\in\mathcal{H}\bigg|\mathcal{A}X\in H\right\}\\
		=&\Bigg\{X=(v,u,p,q,\eta)^\top \bigg| u,v,p,q\in D(A^{1/2}),~~\eta\in \mathcal{M},\\
		&-\alpha_1 A v+k A p+\zeta A^\alpha v-\int_0^\infty g(s) A^\alpha \eta(s) ds \in H,~~-\alpha_2 A p+k A^\beta v\in H \Bigg\}.
	\end{align*}
	Here
	\[
	\mathcal{M}\triangleq \left\{ \eta(s)\in L_g^2,\ \eta_s(s)\in L_g^2,\ \eta(0)=0 \right\}.
	\]
	The main result of this section is the following theorem:
	\begin{theorem}\label{thmwell}
		Suppose that assumptions (A1) and (A2) hold.
		The operator $\mathcal{A}$ generates a $C_0$ semigroup of contractions $\{S(t)\}_{t\geq0}$ on the Hilbert space $\mathcal{H}$.
		Then, for any $X_0\in\mathcal{H}$, problem \eqref{modelmain} admits a unique mild solution
		\[
		X(t)\triangleq S(t)X_0\in C([0,\infty);\mathcal{H}).
		\]
		Moreover, if $X_0\in D(\mathcal{A})$, then
		\[
		X(t)\in C([0,\infty); D(\mathcal{A})) \cap C^1([0,\infty); \mathcal{H})
		\]
		is the classical solution of problem \eqref{modelmain}.
	\end{theorem}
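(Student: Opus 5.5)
The plan is to apply the Lumer--Phillips theorem to $\mathcal{A}$ on $\mathcal{H}$, equipped with the inner product \eqref{innerH}. By Theorem \ref{thmequinorm} this inner product gives a norm equivalent to $\|\cdot\|_{\rm stan}$, so $\mathcal{H}$ is a Hilbert space. Once $\mathcal{A}$ is shown to be dissipative and $I-\mathcal{A}$ (equivalently $-\mathcal{A}$, plus a perturbation argument) is shown to be surjective, two standard facts finish the proof. First, a maximal dissipative operator on a Hilbert space is densely defined. Second, it generates a contraction $C_0$-semigroup $S(t)$. The statements about mild solutions for $X_0\in\mathcal{H}$ and classical solutions in $C([0,\infty);D(\mathcal{A}))\cap C^1([0,\infty);\mathcal{H})$ for $X_0\in D(\mathcal{A})$ then follow from standard semigroup theory (Pazy).

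\textbf{Dissipativity.} For $X=(v,u,p,q,\eta)^\top\in D(\mathcal{A})$ I will compute $\operatorname{Re}\langle\mathcal{A}X,X\rangle_{\mathcal{H}}$ term by term.
\begin{itemize}
\item The $\alpha_1$, $\zeta$ and $\alpha_2$ contributions cancel in pairs after moving half-powers of $A$ across the inner product. For example, $\alpha_1\operatorname{Re}\langle A^{1/2}u,A^{1/2}v\rangle$ cancels against $-\alpha_1\operatorname{Re}\langle Av,u\rangle$.
\item The coupling terms $k\operatorname{Re}\langle Ap,u\rangle$ and $k\operatorname{Re}\langle A^\beta v,q\rangle$ are to be matched against the derivative of the two cross terms in \eqref{innerH}.
\item The memory part gives $\operatorname{Re}\langle u-\eta_s,\eta\rangle_{L^2_g}-\operatorname{Re}\langle \int_0^\infty g A^\alpha\eta\,ds,u\rangle=-\operatorname{Re}\int_0^\infty g(s)\langle A^{\alpha/2}\eta_s,A^{\alpha/2}\eta\rangle ds$. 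Integrating by parts with $\eta(0)=0$, and using that $g(s)\|A^{\alpha/2}\eta(s)\|^2\to0$ along a sequence (justified by $g'\le -k_1g$), this equals $\frac12\int_0^\infty g'(s)\|A^{\alpha/2}\eta(s)\|^2ds\le0$ by (A1).
\end{itemize}

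The delicate point is the coupling bookkeeping, which I expect to be the main obstacle. Because the coupling is asymmetric ($A$ in one equation, $A^\beta$ in the other), the mixed terms need not cancel automatically. A direct count leaves residual terms of the form $-k\operatorname{Re}\langle A^{1/2}v,A^{1/2}q\rangle-k\operatorname{Re}\langle A^{\beta/2}u,A^{\beta/2}p\rangle$. I will first check carefully whether the specific (nonsymmetric) cross terms in \eqref{innerH} are designed to absorb exactly these. If they are not, I will modify the energy: rescale the $p$-component by a weight and adjust the cross terms so that the two couplings become formally adjoint. Concretely, I will try the multiplier $A^{1-\beta}$ acting on the magnetic component, so that the cross terms cancel identically. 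Equivalence with $\|\cdot\|_{\rm stan}$ would then be re-verified via a Young argument as in Theorem \ref{thmequinorm}, using (A2).

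\textbf{Maximality.} I will show $0\in\rho(\mathcal{A})$ by solving $\mathcal{A}X=F$ for $F=(f_1,\dots,f_5)\in\mathcal{H}$.
\begin{itemize}
\item Set $u=f_1$ and $q=f_3$.
\item Solve $u-\eta_s=f_5$ with $\eta(0)=0$, which gives $\eta(s)=sf_1-\int_0^s f_5(\tau)\,d\tau$. To see that $\eta\in L^2_g$, use $-g'\ge k_1g$: this yields $\int_0^\infty s^2g(s)\,ds<\infty$ and a Hardy-type bound $\int_0^\infty g\|A^{\alpha/2}\int_0^sf_5\|^2\le C\|f_5\|^2_{L^2_g}$, obtained by integrating by parts against $g'$. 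Then $\eta_s=f_1-f_5\in L^2_g$, so $\eta\in\mathcal{M}$.
\item The remaining stationary system for $(v,p)\in D(A^{1/2})^2$ is
$-\alpha_1Av+kAp+\zeta A^\alpha v=f_2+\int_0^\infty gA^\alpha\eta\,ds$ and $-\alpha_2Ap+kA^\beta v=f_4$.
I will solve it by Lax--Milgram with the sesquilinear form obtained by testing against $(\tilde v,\tilde p)$, weighting the second equation if needed. Coercivity on $D(A^{1/2})^2$ follows from the Young-inequality computation \eqref{coer}, which uses (A2) and $\|A^{\gamma/2}w\|\le\gamma_1^{(\gamma-1)/2}\|A^{1/2}w\|$ for $\gamma\le1$.
\end{itemize}
The resulting $X$ lies in $D(\mathcal{A})$, and the estimate $\|X\|_{\mathcal{H}}\le C\|F\|_{\mathcal{H}}$ shows that $\mathcal{A}^{-1}$ is bounded. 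Since $\rho(\mathcal{A})$ is open, $\lambda\in\rho(\mathcal{A})$ for small $\lambda>0$, so $\lambda I-\mathcal{A}$ is onto. Lumer--Phillips then yields the theorem.
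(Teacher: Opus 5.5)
Your worry about the coupling bookkeeping is justified. It is in fact fatal to the statement as written, so the first branch of your plan cannot succeed. For $\beta<1$ the expression \eqref{innerH} is not an inner product: with $X=(e_n,0,0,0,0)^\top$ and $\tilde X=(0,0,e_n,0,0)^\top$ it gives $\langle X,\tilde X\rangle=-k\gamma_n$ but $\langle\tilde X,X\rangle=-k\gamma_n^\beta$.

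The paper's proof obtains dissipativity by inserting $\mathcal{A}X$ literally into the first slot of \eqref{innerH}. The cross terms $-k\operatorname{Re}\langle A^{1/2}u,A^{1/2}p\rangle-k\operatorname{Re}\langle A^{\beta/2}q,A^{\beta/2}v\rangle$ then cancel the coupling. For a non-symmetric form, however, this quantity is not $\frac12\frac{d}{dt}\|X\|_{\mathcal H}^2$. With the Hermitian form that actually induces \eqref{normH}, one gets
\[
\operatorname{Re}\langle\mathcal{A}X,X\rangle_{\mathcal H}=\frac k2\operatorname{Re}\langle(A-A^\beta)p,u\rangle-\frac k2\operatorname{Re}\langle(A-A^\beta)v,q\rangle+\frac12\int_0^\infty g'(s)\|A^{\frac\alpha2}\eta(s)\|^2\,ds .
\]
Taking $X=(0,e_n,e_n,0,0)^\top\in D(\mathcal{A})$ gives $\frac k2(\gamma_n-\gamma_n^\beta)>0$ as soon as $\gamma_n>1$. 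Hence $\mathcal{A}$ is not dissipative on $\mathcal{H}$ and cannot generate a contraction semigroup there, for any $\beta\in[0,1)$. The range half of your plan is fine: Lax--Milgram with \eqref{coer} does give $0\in\rho(\mathcal{A})$ on $\mathcal{H}$.

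Your fallback, the weight $A^{1-\beta}$ on the magnetic equation, is the right mechanism. It makes the coupling exactly conservative, and the energy
\[
\|u\|^2+\|A^{\frac{1-\beta}2}q\|^2+\alpha_1\|A^{\frac12}v\|^2-\zeta\|A^{\frac\alpha2}v\|^2+\alpha_2\|A^{1-\frac\beta2}p\|^2-2k\operatorname{Re}\langle A^{\frac12}v,A^{\frac12}p\rangle+\|\eta\|_{L^2_g}^2
\]
is coercive, because (A2) implies $\alpha_2(\alpha_1-\zeta\gamma_1^{\alpha-1})>k^2\gamma_1^{\beta-1}$. But the step ``equivalence with $\|\cdot\|_{\rm stan}$ would then be re-verified via a Young argument'' is false. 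This norm controls $\|A^{1-\beta/2}p\|$ and $\|A^{(1-\beta)/2}q\|$, which are infinite on part of $\mathcal{H}$. What you would actually prove is generation of a contraction semigroup on $D(A^{1/2})\times H\times D(A^{1-\beta/2})\times D(A^{(1-\beta)/2})\times L^2_g$, with maximality redone there. That is a different theorem.

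No renorming rescues $\mathcal{H}$ itself. Suppose $\alpha_1=\alpha_2$, which (A2) allows. Solve $(\mu-\mathcal{A})U=(0,0,0,-e_n,0)^\top$ with $\mu=c+i\sqrt{s_n^+}$ and $c>0$ fixed, as in Section \ref{sec4} with $i\lambda$ replaced by $\mu$. The $e_n$-coefficient of the $v$-component then has modulus of order $\gamma_n^{-\beta/2}$, so $\|U\|_{\mathcal H}\gtrsim\gamma_n^{(1-\beta)/2}\to\infty$ on a fixed vertical line. This contradicts the Hille--Yosida bound, so in that case $\mathcal{A}$ generates no $C_0$-semigroup on $\mathcal{H}$ at all. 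The missing ingredient is therefore a change of phase space, and of the theorem, not a sharper estimate.
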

	
	\begin{proof}
		First,for any $X = (v, u, p, q, \eta)^\top \in {D}(\mathcal{A})$, a direct calculation yields that
		\begin{align}\label{dissipative}
			\operatorname{Re}\langle \mathcal{A}X, X \rangle
			&= \operatorname{Re}\Bigg\{ \alpha_1 \langle A^{\frac{1}{2}}u, A^{\frac{1}{2}}v \rangle - \zeta \langle A^{\frac{\alpha}{2}}u, A^{\frac{\alpha}{2}}v \rangle+ \langle -\alpha_1 A v + k A p + \zeta A^\alpha v - \int_0^\infty g(s)A^\alpha \eta(s)\mathrm{d}s, u \rangle \notag\\
			&\quad + \alpha_2 \langle A^{\frac{1}{2}}q, A^{\frac{1}{2}}p \rangle-k\langle A^{\frac{1}{2}} u, A^{\frac{1}{2}} p\rangle
			-k\langle A^{\frac{\beta}{2}} q, A^{\frac{\beta}{2}} v\rangle\notag\\
			&\quad + \langle -\alpha_2 A p + k A^\beta v, q \rangle  + \int_0^\infty g(s)\langle A^{\frac{\alpha}{2}}(u - \eta_s), A^{\frac{\alpha}{2}}\eta \rangle \mathrm{d}s \Bigg\} \notag\\
			&= -\int_0^\infty g(s)\langle A^{\frac{\alpha}{2}}\eta_s, A^{\frac{\alpha}{2}}\eta \rangle \mathrm{d}s= \frac{1}{2}\int_0^\infty g'(s)\|A^{\frac{\alpha}{2}}\eta\|^2 \mathrm{d}s \le 0,		
		\end{align}
		which shows that the operator $\mathcal{A}$ is dissipative.
		
		Next, we prove that $0 \in \rho(\mathcal{A})$.
		For any $F = (f_1, f_2, z_1, z_2, z_3)^\top \in \mathcal{H}$, we consider the equation
		\(
		\mathcal{A}(v,u,p,q,\eta)^\top = F,
		\)
		which is equivalent to the system
		\begin{align}
			u &= f_1\in D(A^{1/2}), \label{com1}\\
			-\alpha_1 A v + k A p + \zeta A^\alpha v - \int_0^\infty g(s) A^\alpha \eta(s) \mathrm{d}s &= f_2\in H, \label{com2}\\
			q &= z_1\in D(A^{1/2}), \label{com3}\\
			-\alpha_2 A p + k A^\beta v &= z_2\in H, \label{com4}\\
			u - \eta_s(s) &= z_3\in L^2_g.\label{com5}
		\end{align}
		From \eqref{com1}  and \eqref{com5}, we get $\eta_s(s)=f_1-z_3(s)$. Since $f_1\in D(A^{1/2})\subset D(A^{\frac\alpha2})$ and $z_3\in L^2_g$,  it is clear that
		\begin{equation}\label{etsdi}
			\eta_s\in L_g^2.
		\end{equation}
		Since $\eta(0)=0$, integrating $\eta_s(s)=f_1-z_3(s)$ yields
		\[
		\eta(s)=\int_0^s \big[f_1-z_3(r)\big]\mathrm{d}r = f_1 s - \int_0^s z_3(r)\mathrm{d}r.
		\]
		By assumption (A1) and using the Cauchy-Schwarz' and Young's inequalities, it follows that
		\begin{align*}
			\int_{0}^{\infty} g(s)\|A^{\frac{\alpha}{2}}\eta\|^2 ds
			&\leq -\frac{1}{k_{1}} \int_{0}^{\infty} g'(s)\|A^{\frac{\alpha}{2}}\eta\|^2 ds =\frac{2}{k_{1}} \operatorname{Re}\left(\int_{0}^{\infty} g(s)\langle A^{\frac{\alpha}{2}}\eta, A^{\frac{\alpha}{2}}\eta_{s}\rangle ds\right) \\
			&= \frac{2}{k_{1}} \int_{0}^{\infty} g(s)\|A^{\frac{\alpha}{2}}\eta\|
			\|A^{\frac{\alpha}{2}}\eta_{s}\| ds \leq \frac{1}{2} \int_{0}^{\infty} g(s)\|A^{\frac{\alpha}{2}}\eta\|^2 ds
			+\frac{2}{k_{1}^2} \int_{0}^{\infty} g(s)\|A^{\frac{\alpha}{2}}\eta_{s}\|^2 ds.
		\end{align*}
		Thus, by \eqref{etsdi},
		\[
		\int_{0}^{\infty} g(s)\|A^{\frac{\alpha}{2}} \eta\|^{2} d s
		\leq \frac{4}{k_{1}^{2}} \int_{0}^{\infty} g(s)\|A^{\frac{\alpha}{2}} \eta_{s}\|^{2} d s< \infty,
		\]
		Therefore, we obtain
		\begin{equation}\label{etsdi2}
			\eta\in L_g^2.
		\end{equation}
		In view of \eqref{etsdi} and \eqref{etsdi2}, we obtain
		\begin{equation}\label{etsdi3}
			\eta\in \mathcal{M}.
		\end{equation}
		
		The equations \eqref{com2} and \eqref{com4} can be transformed into the following ones
		\begin{align}
			\alpha_1 A v - kA p - \zeta A^\alpha v &= -f_2 - \displaystyle\int_{0}^{\infty} g(s) A^\alpha \eta(s) \, \mathrm{d}s, \label{ws1}\\
			\alpha_2 A p - kA^\beta v &= -z_2\label{ws2}.
		\end{align}
		We now show \eqref{ws1} and \eqref{ws2} admit a weak solution $(v,p)\in \mathcal{V}\triangleq{D}(A^{1/2})\times {D}(A^{1/2})$, which means  for any $(\varphi,\psi)\in\mathcal{V}$ there holds
		\begin{align}
			\alpha_1 \langle A^{\frac{1}{2}}v, A^{\frac{1}{2}}\varphi \rangle
			- k \langle A^{\frac{1}{2}}p, A^{\frac{1}{2}}\varphi \rangle
			- \zeta \langle A^{\frac{\alpha}{2}}v, A^{\frac{\alpha}{2}}\varphi \rangle
			&= -\langle f_2, \varphi \rangle-\int_0^\infty\langle A^{\frac\alpha 2}\eta(s), A^{\frac\alpha 2}\varphi \rangle ds,\label{alpha1} \\
			\alpha_2 \langle A^{\frac{1}{2}}p, A^{\frac{1}{2}}\psi \rangle
			- k \langle A^{\frac{\beta}{2}}v, A^{\frac{\beta}{2}}\psi \rangle
			&= -\langle z_2, \psi \rangle.\label{alpha2}
		\end{align}
		
		Note $f_2,z_2\in H$, $\eta\in L^2_g$ and $\alpha\le1$, the linear operator $G$ given by
		\begin{equation}\label{G}
			G(\varphi,\psi)\triangleq-\langle f_2, \varphi \rangle-\int_0^\infty\langle A^{\frac\alpha 2}\eta(s), A^{\frac\alpha 2}\varphi \rangle ds-\langle z_2, \psi \rangle
		\end{equation}
		is a bounded linear functional on $\mathcal{V}$, and we have, for some constant $C>0$,
		\begin{align*}
			|G(\varphi,\psi)|
			&\leq \|f_2\|\|\varphi\| + \int_0^\infty g(s)\|A^{\frac{\alpha}{2}}\eta\|\|A^{\frac{\alpha}{2}}\varphi\|ds + \|z_2\|\|\psi\| \\
			&\leq C\|F\|_{\mathcal{H}} \|(\varphi,\psi)\|_{\mathcal{V}} + \|A^{\frac{\alpha}{2}}\varphi\|\sqrt{\zeta}\|\eta\|_{L_g^2} \\
			&\leq  C(\|F\|_{\mathcal{H}}+\sqrt{\zeta}\|\eta\|_{L_g^2}) \|(\varphi,\psi)\|_{\mathcal{V}},
		\end{align*}
		where we have used $\|A^{\frac{\alpha}{2}}\varphi\|\le C\|A^{\frac{1}{2}}\varphi\|\le C\|(\varphi,\psi)\|_{\mathcal{V}}$.
		
		By adding the left-hand sides of \eqref{ws1} and \eqref{ws2} we get a linear operator $B$ defined on $\mathcal{V}\times\mathcal{V}$:
		\[
		B((v,p),(\varphi,\psi)) \triangleq\alpha_1 \langle A^{1/2}v,A^{1/2}\varphi\rangle
		- k\langle A^{\frac{1}{2}}p,A^{\frac{1}{2}}\varphi\rangle
		- k\langle A^{\frac{\beta}{2}}v,A^{\frac{\beta}{2}}\psi\rangle \\
		+\alpha_2 \langle A^{1/2}p,A^{1/2}\psi\rangle
		- \zeta\langle A^{\frac{\alpha}{2}}v,A^{\frac{\alpha}{2}}\varphi\rangle.
		\]
		It is obvious that $B(\cdot,\cdot)$ bounded, and by \eqref{coer} in the proof of Theorem \ref{thmequinorm}, we have $B(\cdot,\cdot)$ is coercive. By Lax-Milgram theorem \cite{MR2759829}, there exists a unique solution $(v,p)\in \mathcal{V}$, satisfying $B((v,p),(\varphi,\psi)) = G(\varphi,\psi)$, i.e., \eqref{ws1} and \eqref{ws2} admits a weak solution $(v,p)\in \mathcal{V}$. Moreover, in view of \eqref{com1}-\eqref{com4} and \eqref{etsdi3}, we get there exists $(v,u,p,q,\eta)^\top \in D(\mathcal{A})$ such that \(
		\mathcal{A}(v,u,p,q,\eta)^\top = F,
		\) i.e., the range of $\mathcal{A}$ is the whole $\mathcal{H}$.
		
		So to show $0\in\rho(\mathcal{A})$, we only need to show there exists $C>0$ such that
		\begin{equation}\label{yjgj}
			\|(v,u,p,q,\eta)^\top\|_{\mathcal{H}} \leq C\|F\|_{\mathcal{H}},
		\end{equation}where $(v,u,p,q,\eta)^\top$ is the solution of \(
		\mathcal{A}(v,u,p,q,\eta)^\top = F
		\) with $F = (f_1, f_2, z_1, z_2, z_3)^\top \in \mathcal{H}$.
		Then we have, for any $\varepsilon>0$, there exists a positive constant $k_\varepsilon$ such that
		\begin{align*}
			B((v,p),(v,p)) &= \alpha_1\|A^{1/2}v\|^2 - \zeta\|A^{\frac{\alpha}{2}}v\|^2 - k\operatorname{Re}\langle A^{\frac{1}{2}}p,A^{\frac{1}{2}}v\rangle-k\operatorname{Re}\langle A^{\frac{\beta}{2}}v,A^{\frac{\beta}{2}}p\rangle
			+ \alpha_2\|A^{1/2}p\|^2 \\
			&= G((v,p))\leq \|f_2\|\|v\| + \int_0^\infty g(s)\|A^{\frac{\alpha}{2}}\eta\|\|A^{\frac{\alpha}{2}}v\|ds + \|z_2\|\|p\| \\
			&\leq C\|F\|_{\mathcal{H}}\|A^{1/2}v\| + C\|A^{1/2}v\|\sqrt{\zeta}\|\eta\|_{L_g^2} + C\|F\|_{\mathcal{H}}\|A^{1/2}p\| \\
			&\leq \varepsilon\left(\|A^{1/2}v\|^2+\|A^{1/2}p\|^2\right) + k_\varepsilon\|F\|_{\mathcal{H}}^2.
		\end{align*}
		Since by \eqref{coer} in the proof of Theorem \ref{thmequinorm}, $B((v,p),(v,p))\ge\delta\left(\|A^{1/2}v\|^2+\|A^{1/2}p\|^2\right)$, where $\delta=\min\{\delta_1,\delta_0\}$, by choosing $\varepsilon=\delta/2$, we get
		$$\|A^{1/2}v\|^2+\|A^{1/2}p\|^2\leq C \|F\|_{\mathcal{H}}^2$$ for some constant $C>0$.
		Moreover, it is obvious that, there exists a positive constant $C$
		\[
		\|u\| = \|f_1\| \leq C\|A^{\frac{1}{2}}f_1\|
		\leq C\|F\|_{\mathcal{H}},\quad
		\|q\| = \|z_1\| \leq C\|A^{\frac{1}{2}}z_1\|
		\leq C\|F\|_{\mathcal{H}}.
		\]
		By assumption (A1) and $\eta_s(s)=f_1-z_3(s)$, we obtain
		\begin{align*}
			\|\eta\|_{L_g^2}^2= \int_0^\infty g(s) \| A^{\frac{\alpha}{2}} \eta \|^2 ds
			&\leq \frac{2}{k_1} \left| \int_0^\infty g(s) \langle A^{\frac{\alpha}{2}} \eta_s, A^{\frac{\alpha}{2}} \eta \rangle ds \right| \\
			&= \frac{2}{k_1} \left| \int_0^\infty g(s) \langle A^{\frac{\alpha}{2}} f_1, A^{\frac{\alpha}{2}} \eta \rangle ds \right|
			+ \frac{2}{k_1} | \int_0^\infty g(s) \langle A^{\frac{\alpha}{2}} z_3, A^{\frac{\alpha}{2}} \eta \rangle ds | \\
			&\leq \frac{2}{k_1} \int_0^\infty g(s) \| A^{\frac{\alpha}{2}} f_1 \| \| A^{\frac{\alpha}{2}} \eta \| ds
			+ \frac{2}{k_1} \int_0^\infty g(s) \| A^{\frac{\alpha}{2}} z_3 \| \| A^{\frac{\alpha}{2}} \eta \| ds \\
			&\leq \frac{2}{k_1} \sqrt{\zeta} \| A^{\frac{\alpha}{2}} f_1 \| \| \eta \|_{L^2_g}
			+ \frac{2}{k_1} \| \eta \|_{L^2_g} \| z_3 \|_{L^2_g}\\
			&\le\frac12\|\eta\|_{L^2_g}^2+\frac{4}{k_1^2}\zeta\| A^{\frac{\alpha}{2}} f_1 \|^2+\frac{4}{k_1^2}\| z_3 \|_{L^2_g}^2,
		\end{align*}
		so there exists a constant $C>0$ such that
		$$\|\eta\|_{L_g^2}^2\le \frac{8}{k_1^2}\zeta\| A^{\frac{\alpha}{2}} f_1 \|^2+\frac{8}{k_1^2}\| z_3 \|_{L^2_g}^2\le C\|F\|_{\mathcal{H}}^2.$$
		Thus the above analysis shows \eqref{yjgj} holds. The proof is completed.
	\end{proof}
	\section{Polynomial stability}\label{sec3}
The following characterization for polynomial stability of contraction $\mathcal{C}_0$-semigroups originates from Borichev and Tomilov \cite{MR2606945}.
\begin{theorem}\label{thmBT}
Assume $i\mathbb{R}\subset \rho(\mathcal{A})$ with $\rho(\mathcal{A})$ being the resolvent set of $\mathcal{A}$. Then the estimate
\[
\|S(t)X_0\|_{\mathcal{H}} \le \mathcal{M}\|X_0\|_{D(\mathcal{A})} \,t^{-\frac1\omega},\qquad \forall  t>0, X_0 \in D(\mathcal{A}),\;
\]
holds for some constant $\mathcal{M}>0$ and exponent $\omega>0$ if and only if
\begin{align}\label{eq:lemma1}
\sup_{\lambda\in\mathbb{R}} |\lambda|^{-\omega} \left\| (i\lambda I - \mathcal{A})^{-1} \right\|_{\mathcal{L}(\mathcal{H})} < \infty,
\end{align}
where $\left\| (i\lambda I - \mathcal{A})^{-1} \right\|_{\mathcal{L}(\mathcal{H})}$ denotes the norm of the operator $(i\lambda I - \mathcal{A})^{-1}$.
\end{theorem}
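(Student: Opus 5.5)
Since this is the Borichev--Tomilov characterization, the paper will simply cite \cite{MR2606945}. If I were to prove it, I would argue in the following way, using throughout that $S(t)$ is a contraction semigroup on the Hilbert space $\mathcal{H}$ (Theorem \ref{thmwell}) and that $0\in\rho(\mathcal{A})$. The latter means $\|X_0\|_{D(\mathcal{A})}\simeq\|\mathcal{A}X_0\|_{\mathcal{H}}$, so the decay estimate is equivalent to $\|S(t)\mathcal{A}^{-1}\|_{\mathcal{L}(\mathcal{H})}\le \mathcal{M}t^{-1/\omega}$.

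\emph{Necessity.} I would fix $\lambda\in\mathbb{R}$ with $|\lambda|\ge1$ and $y\in D(\mathcal{A})$, and set $x=(i\lambda-\mathcal{A})y$. Integrating $\frac{d}{dt}\big(e^{-i\lambda t}S(t)y\big)$ gives
\[
y-e^{-i\lambda\tau}S(\tau)y=\int_0^\tau e^{-i\lambda t}S(t)x\,dt .
\]
Writing $S(\tau)y=S(\tau)\mathcal{A}^{-1}(i\lambda y-x)$ yields
\[
\|S(\tau)y\|\le \mathcal{M}\tau^{-1/\omega}\big(|\lambda|\|y\|+\|x\|\big).
\]
I would then choose $\tau=(2\mathcal{M}|\lambda|)^{\omega}$, so that this term is at most $\tfrac12\|y\|+\|x\|$. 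Bounding the integral by $\tau\|x\|$, this gives $\|y\|\le C|\lambda|^{\omega}\|x\|$, which is exactly \eqref{eq:lemma1} on $|\lambda|\ge1$. On the compact set $|\lambda|\le1$ the bound follows from continuity of the resolvent, since $i\mathbb{R}\subset\rho(\mathcal{A})$.

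\emph{Sufficiency.} The plan is a Hilbert-space Plancherel argument. From \eqref{eq:lemma1}, a Neumann series gives $\|R(\mu)\|\le C(1+|\mathrm{Im}\,\mu|)^{\omega}$ on a region $\{|\mathrm{Re}\,\mu|<c(1+|\mathrm{Im}\,\mu|)^{-\omega}\}$. It is enough to bound the orbits $S(t)R(1)^{m}x$ for an integer $m>\omega$ and then interpolate. For $\mathrm{Re}\,\mu>0$, the Laplace transform of $S(\cdot)x$ is $R(\mu)x$. By Plancherel on the lines $\mathrm{Re}\,\mu=\varepsilon$, together with the identity
\[
R(\mu)R(1)^m=\sum_{j=1}^{m}(1-\mu)^{-j}R(1)^{m+1-j}+(1-\mu)^{-m}R(\mu),
\]
I would estimate $\int_0^\infty\|t^kS(t)R(1)^mx\|^2e^{-2\varepsilon t}\,dt$. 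These quantities are derivatives of the resolvent, which are controlled through the resolvent bound on the imaginary axis, uniformly as $\varepsilon\to0$. Combined with the contraction property, this gives $\|S(t)R(1)^{m}\|\le Ct^{-m/\omega}$. The moment inequality $\|\mathcal{A}^{-1}x\|\le C\|\mathcal{A}^{-m}x\|^{1/m}\|x\|^{1-1/m}$, applied along the semigroup, then brings this down to $\|S(t)\mathcal{A}^{-1}\|\le Ct^{-1/\omega}$.

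\emph{Main obstacle.} The hard step is sufficiency, specifically getting the sharp exponent $1/\omega$ with no logarithmic loss. A Banach-space contour argument (Batty--Duyckaerts) only gives $(t/\log t)^{-1/\omega}$. Removing the logarithm relies essentially on Plancherel in $\mathcal{H}$, used through the square-function estimates above. That is where I expect most of the work to lie, and why I would ultimately defer to \cite{MR2606945} for the details.
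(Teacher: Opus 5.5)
The paper gives no proof of this statement; it only attributes it to Borichev and Tomilov \cite{MR2606945}, as you anticipated. So the question is whether your sketch stands on its own, and it does not quite.

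Your necessity argument is the standard one and is correct for $|\lambda|\ge1$. However, your claim that on $|\lambda|\le1$ the bound ``follows from continuity of the resolvent'' is false. Continuity bounds $\|(i\lambda I-\mathcal{A})^{-1}\|$, not $|\lambda|^{-\omega}\|(i\lambda I-\mathcal{A})^{-1}\|$, and the latter blows up as $\lambda\to0$ because $\|\mathcal{A}^{-1}\|\neq0$. In fact \eqref{eq:lemma1}, read literally, fails for every $\omega>0$. It must be read as a supremum over $|\lambda|\ge1$, equivalently a $\limsup_{|\lambda|\to\infty}$, which is what the paper actually verifies in \eqref{secmain}. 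With that reading your necessity proof is complete.

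The genuine gap is in sufficiency, at the asserted bound $\|S(t)R(1)^m\|\le Ct^{-m/\omega}$. The mechanism you describe uses integer moments. By Plancherel, $\int_0^\infty e^{-2\varepsilon t}t^{2k}\|S(t)y\|^2\,dt=\frac{(k!)^2}{2\pi}\int_{\mathbb{R}}\|R(\varepsilon+is)^{k+1}y\|^2\,ds$. For $y=R(1)^mx$, iterating the resolvent identity (whose sum, incidentally, should carry a minus sign) bounds $\sup_s\|R(\varepsilon+is)^kR(1)^m\|$ uniformly in small $\varepsilon$ exactly under the constraint $m\ge k\omega$; the hypothesis gives nothing better. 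The remaining factor $\int_{\mathbb{R}}\|R(\varepsilon+is)x\|^2\,ds$ is only $O(\varepsilon^{-1}\|x\|^2)$, not uniform in $\varepsilon$. One concludes by taking $\varepsilon=1/t$ and using $\|S(t)y\|\le\|S(\tau)y\|$ for $\tau\le t$.

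What this actually yields is $\|S(t)R(1)^m\|\le Ct^{-k}$ with an integer $k\le m/\omega$, hence only $\|S(t)\mathcal{A}^{-1}\|\le Ct^{-k/m}$. This equals $t^{-1/\omega}$ only when $m/\omega\in\mathbb{N}$, i.e. $\omega$ is rational and $m$ is a multiple of its numerator. For irrational $\omega$, which is the generic case for $\omega=4-2\beta-2\alpha$ here, you get $t^{-1/\omega+\epsilon}$ for each $\epsilon>0$ but not the sharp rate. Interpolating between admissible pairs $(m,k)$ does not reach the critical line $m=k\omega$ either.

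The missing idea is to work with the fractional power $(-\mathcal{A})^{-\omega}$ itself:
\begin{enumerate}
\item The estimate $\|R(\mu)\|\le C(1+|\mu|)^{\omega}$ holds on $\{\mathrm{Re}\,\mu\ge0\}$, using your Neumann series near $i\mathbb{R}$ and Hille--Yosida elsewhere.
\item Combining this with the moment inequality at exponent $\lceil\omega\rceil$ gives $\sup_{\mathrm{Re}\,\mu>0}\|R(\mu)(-\mathcal{A})^{-\omega}\|<\infty$.
\item The $k=1$ Plancherel argument above then gives $\|S(t)(-\mathcal{A})^{-\omega}\|\le C/t$.
\item Finally, $\|S(t)(-\mathcal{A})^{-j\omega}\|\le(Cj/t)^j$ together with the moment inequality yields $\|S(t)\mathcal{A}^{-1}\|\le Ct^{-1/\omega}$.
\end{enumerate}
The intermediate bound $\|S(t)(-\mathcal{A})^{-\omega}\|=O(t^{-1})$ is precisely one of the equivalent conditions in \cite{MR2606945}.
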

	\begin{theorem}\label{thmdecay}
		Assume that conditions (A1) and (A2) hold. Then the semigroup \(S(t)\) generated by the system \eqref{modelmain} is polynomially stable with decay rate \(t^{-\frac{1}{4-2\beta-2\alpha}}\) for smooth initial data. Specifically, for any initial state \(X_0=(v_0, v_1, p_0, p_1, \eta_0)^\top \in D(\mathcal{A})\), the corresponding solution $X(t)=(v,u,p,q,\eta)^\top=S(t)X_0$ satisfies
		\[
		\|X(t)\|_{\mathcal{H}} \leq C t^{-\frac{1}{4-2\beta-2\alpha}} \|X_0\|_{D(\mathcal{A})}, \quad \forall t \geq 1,
		\]
		where \(C > 0\) is a constant independent of \(t\) and the initial data and $\|X_0\|_{D(\mathcal{A})}$.
	\end{theorem}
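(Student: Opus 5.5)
We will derive the estimate from the Borichev--Tomilov characterization (Theorem \ref{thmBT}) with $\omega=4-2\beta-2\alpha$. Two things must be shown: first that $i\mathbb{R}\subset\rho(\mathcal{A})$, and second that
\[
\|(i\lambda I-\mathcal{A})^{-1}\|_{\mathcal{L}(\mathcal{H})}\le C|\lambda|^{4-2\beta-2\alpha}\quad\text{for } |\lambda|\ge 1 .
\]
Since $0\in\rho(\mathcal{A})$ by Theorem \ref{thmwell} and $\mathcal{A}$ has compact resolvent (the history component needs some care here), the first claim reduces to excluding purely imaginary eigenvalues.

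\textbf{No imaginary eigenvalues.} Suppose $\mathcal{A}X=i\lambda X$ with $\lambda\neq0$. The dissipation identity \eqref{dissipative} together with (A1) gives $\int_0^\infty g\|A^{\alpha/2}\eta\|^2=0$, so $\eta=0$. From $\eta_s=u-i\lambda\eta$ we then get $u=0$, and hence $v=0$. The second equation now forces $kAp=0$, so $p=0$, and therefore $q=0$.

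\textbf{Resolvent estimate: setup and first bounds.} We argue by contradiction. Take $\lambda_n\to\infty$ and unit vectors $X_n\in D(\mathcal{A})$ with $\lambda_n^{\omega}\|(i\lambda_n-\mathcal{A})X_n\|_{\mathcal{H}}=\lambda_n^{\omega}\|F_n\|\to0$; equivalently, we can run direct multiplier estimates in terms of $\|X\|\|F\|$. The argument climbs a chain.

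\emph{Step 1 (dissipation).} The real part of the resolvent identity gives
\[
-\tfrac12\int g'\|A^{\alpha/2}\eta\|^2\le \|X\|\|F\|.
\]
By (A1) this controls $\|\eta\|_{L^2_g}^2\lesssim\|X\|\|F\|$.

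\emph{Step 2 (recovering $u$).} Multiply the $\eta$-equation $i\lambda\eta-u+\eta_s=z_3$ by $g(s)A^{\alpha}u$ and integrate in $s$. The term $\int g\,\eta_s$ is integrated by parts and controlled by $\int(-g')\|A^{\alpha/2}\eta\|^2$, and $\lambda u$ is replaced through the $v$-equation. This should yield $\|A^{\alpha/2}u\|^2\lesssim|\lambda|\cdots$, which interpolates to a bound on $\|u\|^2$ of order $|\lambda|^{2-2\alpha}\|X\|\|F\|$ plus lower-order terms. This is the same mechanism as in \cite{MR4939108}: the resolvent of the damped $v$-component loses $|\lambda|^{2-2\alpha}$. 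Multiplying the $v$-equation by $v$ then gives $\|A^{1/2}v\|^2$ at the same order.

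\textbf{Main obstacle: transferring to $(p,q)$.} The difficulty is the magnetic component, because its feedback enters only through $kA^\beta v$. The plan is to multiply the $u$-equation by $\bar p$ (or $A^{-1}$-weighted versions of it) and the $q$-equation by $\bar v$, then subtract. The principal parts combine into
\[
(\alpha_1-\alpha_2)\langle A^{1/2}v,A^{1/2}p\rangle + k\|A^{1/2}p\|^2 - k\langle A^\beta v,v\rangle + \dots
\]
So $\|A^{1/2}p\|^2$ is bounded by terms involving $v$, which are already controlled, times cross terms. Because $A^\beta$ with $\beta<1$ is weaker than $A$, only an extra factor $|\lambda|^{2-2\beta}$ can be afforded, obtained through the high-frequency relation $\lambda^2\sim\gamma_n$. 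One rewrites $kA^\beta v=(-\lambda^2+\alpha_2A)p+\dots$ and uses $\|A^{\beta/2}\cdot\|\ge|\lambda|^{\beta-1}\|A^{1/2}\cdot\|$ on the relevant frequency band, after splitting frequencies into $\gamma_n\lesssim\lambda^2$ and $\gamma_n\gg\lambda^2$.

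Combining the losses $|\lambda|^{2-2\alpha}$ and $|\lambda|^{2-2\beta}$ should give
\[
\|X\|^2\lesssim|\lambda|^{4-2\alpha-2\beta}\|X\|\|F\| + \text{absorbable terms},
\]
that is, $\|X\|\lesssim|\lambda|^{\omega}\|F\|$. I expect this coupling-transfer step to be the hardest: keeping exact track of the powers of $\lambda$ while absorbing the cross term $(\alpha_1-\alpha_2)\langle A^{1/2}v,A^{1/2}p\rangle$ via Young's inequality and the positivity in (A2). If the direct multiplier approach loses too much, I would fall back on a spectral (modal) decomposition along the eigenbasis $\{e_n\}$. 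In that basis the problem reduces to a $4\times4$ scalar system in which $\gamma_n$ plays the role of $A$ and the memory contributes $\gamma_n^\alpha\hat g(i\lambda)$, and uniform bounds over $n$ can be computed explicitly.
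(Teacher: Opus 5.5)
Your proposal does not yet prove the resolvent bound. The step you call the main obstacle, passing to $(p,q)$ with exactly one extra factor $|\lambda|^{2-2\beta}$, is only a plan with a fallback, and the order in which you intend to climb cannot work. You want to control the mechanical part first, $\|u\|^2\lesssim|\lambda|^{2-2\alpha}\|X\|\|F\|$ plus lower-order terms, and then get $\|A^{1/2}v\|^2$ at the same order by testing the $v$-equation with $v$. But that equation contains the full-strength term $kAp$. Testing with $v$ leaves $k\langle A^{1/2}p,A^{1/2}v\rangle$, which is not of lower order.

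The explicit solutions in the proof of Theorem~\ref{thmoptimaldecay} make this concrete. Take $g(s)=g(0)e^{-\delta s}$ and $\alpha_1\neq\alpha_2$ (both admissible under (A1)--(A2)), $F_n=(0,0,0,-e_n,0)^\top$, and $\lambda_n^2=s_n^{+}$ or $s_n^{-}$ as there. Then
\[
\frac{|k_{1,n}|}{|k_{2,n}|}=\frac{k\gamma_n}{|P_1(\lambda_n^2)+I_{\lambda_n}\gamma_n^\alpha|}\longrightarrow\frac{k}{|\alpha_1-\alpha_2|},
\]
so $\|u_n\|\asymp\|q_n\|\asymp\|U_n\|_{\mathcal H}\asymp\lambda_n^{4-2\alpha-2\beta}$ while $\|F_n\|_{\mathcal H}=1$. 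If the remaining terms are genuinely of lower order, your Step~2 bound would give $\|U_n\|_{\mathcal H}\lesssim\lambda_n^{2-2\alpha}$. This is false because $\beta<1$. In the critical regime $\lambda^2\approx\alpha_2\gamma_n$ the mechanical component is as large as the magnetic one, so the loss does not split as ``$2-2\alpha$ for $v$, then $2-2\beta$ for $p$''.

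The paper orders things the other way round. It puts all the $\lambda$-loss into $\|\eta_n\|_{L^2_g}$ and $\|u_n\|$, using the multipliers $A^{\alpha}u$ and $A^{\alpha-1}u$ on the $\eta$-equation and then interpolating. It recovers $v,p,q$ afterwards with no further loss: it adds $\frac{k}{\alpha_2}$ times the $p$-equation to eliminate $kAp$ (see \eqref{cov5}) and uses $\lambda^2\langle p,v\rangle\approx\langle q,u\rangle$. Note also that with the multiplier $g(s)A^\alpha u$ you cannot substitute $i\lambda u$ from the $v$-equation. That produces $\langle A^{\alpha/2}\eta,A^{1+\alpha/2}v\rangle$, which nothing controls. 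The weaker multiplier $A^{\alpha-1}u$ is what makes the substitution possible, and it also supplies the second norm for the interpolation.

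Two further steps fail as stated. First, $\mathcal A$ does not have compact resolvent, because the history variable carries no extra spatial regularity. For $F_j=(0,0,0,0,\phi(s)\gamma_j^{-\alpha/2}e_j)^\top$ with $\phi$ bounded and compactly supported, the $\eta$-components of $\mathcal A^{-1}F_j$ are $-\big(\int_0^s\phi\big)\gamma_j^{-\alpha/2}e_j$. These are mutually orthogonal in $L^2_g$ with constant norm. So ``no imaginary eigenvalues'' does not by itself give $i\mathbb R\subset\rho(\mathcal A)$; you must also show $i\lambda-\mathcal A$ is onto. One way is to solve the $\eta$-equation explicitly and apply the Fredholm alternative to the reduced $(v,p)$ system; another is an approximate-eigenvector argument as in the paper.

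Second, your Step~1, which is also the paper's step (1) leading to \eqref{sj6}, is contradicted by the same sequence:
\[
\|\eta_n\|_{L^2_g}^2=\gamma_n^\alpha|k_{1,n}|^2\int_0^\infty g(s)\big|1-e^{-i\lambda_n s}\big|^2\,ds\asymp\lambda_n^{6-2\alpha-4\beta},
\qquad
\|U_n\|_{\mathcal H}\|F_n\|_{\mathcal H}\asymp\lambda_n^{4-2\alpha-2\beta}.
\]
The reason is that the form \eqref{innerH} has one-sided cross terms and is not Hermitian. Consequently $\operatorname{Re}\langle i\lambda X,X\rangle_{\mathcal H}=k\lambda\operatorname{Im}\big(\langle A^{1/2}v,A^{1/2}p\rangle-\langle A^{\beta/2}v,A^{\beta/2}p\rangle\big)\neq0$. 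Since the ratio above grows like $\lambda_n^{2-2\beta}$, no equivalent norm can rescue the bound. The contractivity claimed in Theorem~\ref{thmwell}, on which your use of Theorem~\ref{thmBT} relies, rests on the same computation.

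A correct argument must account for the asymmetry of the coupling. Your modal fallback is the natural way to do this: $\|\cdot\|_{\rm stan}$ splits orthogonally over the modes $e_n$, and the $\eta$-equation can be integrated explicitly in each mode. But it has to be carried out, with bounds uniform in $n$ and $\lambda$, not just along the resonant sequence.
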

	\begin{proof}
		Throughout our proofs, all limits are taken as $n\to\infty$.
		
\noindent \textbf{First, we show that ${i\mathbb{R} \subset \rho(\mathcal{A})}$.} By contradiction, suppose that $i\mathbb{R} \not\subset \varrho(\mathcal{A})$. Thus, due to $0 \in \varrho(\mathcal{A})$ and $\varrho(\mathcal{A})$ is an open set, we have
		\[
		0 < \hat{\lambda} < \infty,
		\]
		in which
		\[
		\hat{\lambda} := \sup\left\{ R > 0 : [-Ri, Ri] \subset \varrho(\mathcal{A}) \right\},
		\]
		
		Thanks to Banach-Steinhaus theorem, there exist sequences $X_n = (v_n, u_n, p_n, q_n, \eta_n) \in {D}(\mathcal{A})$ with $\|X_n\|_{\mathcal{H}} = 1$
		and $|\lambda_n| < \hat{\lambda}$, $\lambda_n \to \hat{\lambda}$ such that
		\begin{equation}\label{cov10}
			(i\lambda_n - \mathcal{A})X_n \equiv (f_n^1, f_n^2, z_n^1, z_n^2, z_n^3) \to 0 \quad \text{in } \mathcal{H},
		\end{equation}
		specifically,
		\begin{align}
			i\lambda_n v_n - u_n &= f_n^{1}\to0 \quad \text{in } {D}({A}^{\frac{1}{2}}), \label{gp1}\\
			i\lambda_n u_n - \left(-\alpha_1 A v_n + k A p_n + \zeta A^\alpha v_n - \int_0^\infty g(s) A^\alpha \eta_n\mathrm{d}s\right) &= f_n^{2}\to0 \quad \text{in } {H}, \label{gp2}\\
			i\lambda_n p_n - q_n &= z_n^{ 1}\to0 \quad \text{in } {D}({A}^{\frac{1}{2}}), \label{gp3}\\
			i\lambda_n q_n - \left(-\alpha_2 A p_n + k A^\beta v_n\right) &= z_n^{2}\to0 \quad \text{in } {H}, \label{gp4}\\
			i\lambda_n \eta_n - (u_n - \eta_{n,s}) &= z_n^{3}\to0 \quad \text{in } L_g^2.\label{gp5}
		\end{align}
		
		Next we verify
		\begin{equation}\label{gp}
			\|{A}^{\frac12}v_n\|,\ \|u_n\|,\ \|{A}^{\frac12}p_n\|,\ \|q_n\|,\ \|\eta_n\|_{L_g^2} = o(1)
		\end{equation}
		to get a contraction, where $g_n=o(1)$ means $g_n\to0$. In fact, if \eqref{gp} holds, by Theorem \ref{thmequinorm}, $$\|X_n\|_{\mathcal{H}}^2\le\kappa_1\left(\|A^{1/2}v_n\|^2+\|u_n\|^2+\|A^{1/2}p_n\|^2+\|q_n\|^2+\|\eta_n\|_{L^2_g}^2\right)\to0$$
		as $n\to\infty$, which contradicts $\|X_n\|_{\mathcal{H}}=1$.
		
	{\bf \noindent(1) $\bm{\|\eta_n\|_{L_g^2} =o(1)}$.}
		By \eqref{dissipative} and \eqref{cov10}, we have
		\[
		\int_0^\infty g'(s)\|A^{\frac{\alpha}{2}}\eta_n\|^2 \mathrm{d}s
		= 2\mathrm{Re}\langle \mathcal{A}X_n,X_n \rangle
		= -2\mathrm{Re}\langle \mathrm{i}\lambda_n X_n - \mathcal{A}X_n,X_n \rangle \to 0.
		\]
		Moreover, it follows the assumption (A2) that
		\[k_1\|\eta_n\|_{L^2_g}^2=k_1\int_0^\infty g(s)\|A^{\frac{\alpha}{2}}\eta_n(s)\|^2 \mathrm{d}s\le-\int_0^\infty g'(s)\|A^{\frac{\alpha}{2}}\eta_n\|^2 \mathrm{d}s.\]
		Then $\|\eta_n\|_{L_g^2} =o(1)$ follows from the above two relations.
		
		{\bf \noindent(2) $\bm{\|u_n\|=o(1)}$.}
		Taking the $L_g^2$-inner product of equation \eqref{gp5} with $u_n$ yields
		\begin{equation}\label{cov2}
			\int_{0}^{\infty} g(s)\langle A^{\frac{\alpha}{2}}(\mathrm{i}\lambda_{n}\eta_{n}-u_{n}+\eta_{n,s}),A^{\frac{\alpha}{2}}u_{n}\rangle\mathrm{d}s \le\|z^3_n\|_{L^2_g}\|u_n\|_{L^2_g}\to 0,
		\end{equation}
		where we have used $\|z^3_n\|_{L^2_g}\to0$ and $\|u_n\|_{L^2_g}\le\sqrt\zeta\|A^{\frac\alpha2}u_n\|\le C\|X_n\|_{\mathcal{H}}=C$ for some constant $C>0$.
		
		Since $\|\eta_n\|_{L_g^2} =o(1)$, by the boundedness of $\| A^{\frac{\alpha}{2}} u_n \|$ and $|\lambda_n|$ with respect to $n$, we obtain
		\begin{align*}
			\left|\int_{0}^{\infty} g(s)\langle A^{\frac{\alpha}{2}}\mathrm{i}\lambda_{n}\eta_{n},A^{\frac{\alpha}{2}}u_{n}\rangle\mathrm{d}s\right|= |\lambda_{n}|\left|\int_{0}^{\infty} g(s)\langle A^{\frac{\alpha}{2}}\eta_{n},A^{\frac{\alpha}{2}}u_{n}\rangle\mathrm{d}s| \leq |\lambda_{n}\right| \|A^{\frac{\alpha}{2}}u_{n}\|  \sqrt{\zeta} \|\eta_{n}\|_{L_{g}^{2}} \to 0,
		\end{align*}
		and by the assumption (A2),
		\begin{align*}
			\left|\int_{0}^{\infty} g(s)\langle A^{\frac{\alpha}{2}}\eta_{n,s},A^{\frac{\alpha}{2}}u_{n}\rangle\mathrm{d}s\right|&= \left|-\int_{0}^{\infty} g'(s)\langle A^{\frac{\alpha}{2}}\eta_{n},A^{\frac{\alpha}{2}}u_{n}\rangle\mathrm{d}s\right| \\
			&\leq k_{0}\int_{0}^{\infty} g(s)\|A^{\frac{\alpha}{2}}\eta_{n}\|\mathrm{d}s \|A^{\frac{\alpha}{2}}u_{n}\| \leq k_{0}\|A^{\frac{\alpha}{2}}u_{n}\|\sqrt{\zeta}\|\eta_{n}\|_{L_{g}^{2}} \to 0.
		\end{align*}
		Combining the above two relations with 	\eqref{cov2}, we get
		\[
		\left|\int_{0}^{\infty} g(s)\langle A^{\frac{\alpha}{2}}u_{n},A^{\frac{\alpha}{2}}u_{n}\rangle\mathrm{d}s\right|
		= \zeta\|A^{\frac{\alpha}{2}}u_{n}\|^{2}\to0,
		\]
		which implies $\|u_n\|=o(1)$.
		
		{\bf \noindent(3) $\bm{\|A^{\frac{1}{2}} v_n\|=o(1)}$.}	
		Substituting \eqref{gp1} into \eqref{gp2} yields
		\begin{equation}\label{cov3}
			-\lambda_n^2 v_n + \alpha_1 A v_n - k A p_n - \zeta A^\alpha v_n + \int_0^\infty g(s) A^\alpha \eta_n \mathrm{d}s =i\lambda_nf_n^1+f_n^2\to 0 \quad \text{in } H.
		\end{equation}
		Substituting \eqref{gp3} into \eqref{gp4}  yields
		\begin{equation}\label{cov4}
			-\lambda_n^2 p_n + \alpha_2 A p_n - k A^\beta v_n =i\lambda_nz_n^1+z_n^2\to 0 \quad \text{in } H.
		\end{equation}
		Multiplying \eqref{cov4} by $\frac{k}{\alpha_2}$ and adding it to \eqref{cov3}, we obtain
		\begin{equation}\label{cov5}
			-\lambda_n^2 v_n + \alpha_1 A v_n - \zeta A^\alpha v_n + \int_0^\infty g(s) A^\alpha \eta_n \, ds - \frac{k}{\alpha_2} \lambda_n^2 p_n - \frac{k^2}{\alpha_2} A^\beta v_n \to 0
		\end{equation}
		Taking the $L^2$-inner product of \eqref{cov5} with $v_n$ yields
		\begin{align}\label{eq:inner_product_vn}
			&-\lambda_n^2 \|v_n\|^2 + \alpha_1 \|A^{\frac{1}{2}} v_n\|^2 - \zeta \|A^{\frac{\alpha}{2}} v_n\|^2 \notag \\
			&+ \int_0^\infty g(s) \langle A^{\frac{\alpha}{2}} \eta_n, A^{\frac{\alpha}{2}} v_n \rangle ds - \frac{k}{\alpha_2} \lambda_n^2 \langle p_n, v_n \rangle
			- \frac{k^2}{\alpha_2} \|A^{\frac{\beta}{2}} v_n\|^2 \to 0.
		\end{align}
		
		Since $\|u_n\|=o(1)$, it follows from \eqref{gp1} that
		\begin{equation}\label{vn1}
			\lambda_n^2 \|v_n\|^2 \to0.
		\end{equation}
		Since $\|\eta_n\|_{L_g^2}=o(1)$, and $\| A^{\frac{\alpha}{2}} v_n \|$ is bounded with respect to $n$, we obtain
		\begin{equation}\label{vn2}
			\left| \int_0^\infty g(s) \langle A^{\frac{\alpha}{2}} \eta_n, A^{\frac{\alpha}{2}} v_n \rangle ds \right| \leq \sqrt{\zeta} \|A^{\frac{\alpha}{2}} v_n\|\|\eta_n\|_{L_g^2} \to 0.
		\end{equation}
		By virtue of \eqref{gp1}, \eqref{gp3}, the boundedness of $\|q_n\|$, and $\|u_n\|,~\|f_n^1\|,~\|z_n^1\|=o(1)$ , we have
		\begin{equation}\label{vn3}
			\lambda_n^2 |\langle p_n, v_n \rangle|
			=  \langle q_n+z_n^1, u_n+f_n^1 \rangle\le\|q_n\|(\|u_n\|+\|f_n^1\|)+\|z_n^1\|(\|u_n\|+\|f_n^1\|) \to 0.
		\end{equation}
		By \eqref{eq:inner_product_vn}-\eqref{vn3}, we obtain
		\begin{equation}\label{vn4}
			\alpha_1 \| A^{\frac{1}{2}} v_n \|^2
			- \zeta \| A^{\frac{\alpha}{2}} v_n \|^2
			- \frac{k^2}{\alpha_2} \| A^{\frac{\beta}{2}} v_n \|^2
			\to 0.
		\end{equation}
As the proof of Theorem \ref{thmequinorm},
$$\alpha_1 \| A^{\frac{1}{2}} v_n \|^2
			- \zeta \| A^{\frac{\alpha}{2}} v_n \|^2
			- \frac{k^2}{\alpha_2} \| A^{\frac{\beta}{2}} v_n \|^2\ge\left(\alpha_1-\zeta\gamma_1^{\alpha-1}-\frac{k^2}{\alpha_2}\gamma_1^{\beta-1}\right)\|A^{1/2}v_n\|.$$
Since, by the Assumption (A2), $\alpha_1-\zeta\gamma_1^{\alpha-1}-\frac{k^2}{\alpha_2}\gamma_1^{\beta-1}>0$, we deduce from the above inequality and \eqref{vn4} that  $\| A^{\frac{1}{2}} v_n \|=o(1).$
		
		{\bf \noindent(4) $\bm{\|q_n\|=o(1)}$ and $\bm{\|A^{\frac{1}{2}} p_n\|=o(1)}$.} Since $\|p_n\|$ is bounded, taking the $H$-inner product of \eqref{cov5} with $p_n$, we obtain
		\begin{align}\label{qn1}
			&-\lambda_n^2 \langle v_n, p_n \rangle
			+ \alpha_1 \langle A^{\frac{1}{2}} v_n, A^{\frac{1}{2}} p_n \rangle
			- \zeta \langle A^{\frac{\alpha}{2}} v_n, A^{\frac{\alpha}{2}} p_n \rangle \notag \\
			&+ \int_0^\infty g(s) \langle A^{\frac{\alpha}{2}} \eta_n, A^{\frac{\alpha}{2}} p_n \rangle ds
			- \frac{k}{\alpha_2} \lambda_n^2 \|p_n\|^2
			- \frac{k^2}{\alpha_2} \langle A^{\frac{\beta}{2}} v_n, A^{\frac{\beta}{2}} p_n \rangle
			\to 0.
		\end{align}
		By \eqref{vn3}, we have
		\begin{equation}\label{qn2}
			-\lambda_n^2 \langle v_n, p_n \rangle = -\langle u_n, q_n \rangle \to 0.
		\end{equation}
		Since $\| A^{\frac{1}{2}} v_n \|=o(1)$ and $\alpha,\beta<1$, we have $\| A^{\frac{\alpha}{2}} v_n \|,~\| A^{\frac{\beta}{2}} v_n \|=o(1)$, then it follows from the  boundedness of $\|A^{\frac{1}{2}} p_n\|$ that
		\begin{equation}\label{qn3}
			\alpha_1 \langle A^{\frac{1}{2}} v_n, A^{\frac{1}{2}} p_n \rangle
			- \zeta \langle A^{\frac{\alpha}{2}} v_n, A^{\frac{\alpha}{2}} p_n \rangle
			- \frac{k^2}{\alpha_2} \langle A^{\frac{\beta}{2}} v_n, A^{\frac{\beta}{2}} p_n \rangle
			\to 0.
		\end{equation}
		Since $\|\eta_n\|_{L_g^2}=o(1)$, and $\| A^{\frac{\alpha}{2}} p_n \|$ is bounded with respect to $n$, we obtain
		\begin{equation}\label{qn4}
			\left| \int_0^\infty g(s) \langle A^{\frac{\alpha}{2}} \eta_n, A^{\frac{\alpha}{2}} p_n \rangle ds \right| \leq \sqrt{\zeta} \|A^{\frac{\alpha}{2}} p_n\|\|\eta_n\|_{L_g^2} \to 0.
		\end{equation}
Then \eqref{vn3}, \eqref{qn1}-\eqref{qn4} yield
\begin{equation}\label{qn5-1}
		\lambda_n^2 \|p_n\|^2 \to 0.
		\end{equation}
By \eqref{gp3}, we get $$\|q_n\|^2=\|i\lambda_np_n-z_n^1\|^2\le\lambda_n^2\|p_n\|^2+2|\lambda_n|\|p_n\|\|z_n^1\|+\|z_n^1\|^2.$$
Since $|\lambda_n|$ and $\|p_n\|$ are bounded, $\|z_n^1\|=o(1)$, we get from \eqref{qn5-1} that $\|q_n\|=o(1)$.
		
Since $\|p_n\|$ is bounded, taking the $H$-inner product of \eqref{cov4} with $p_n$, we have
		\begin{equation}\label{pn1}
			-\lambda_n^2 \|p_n\|^2 + \alpha_2 \|A^{\frac{1}{2}}p_n\|^2 - k \langle A^{\frac{\beta}{2}}v_n, A^{\frac{\beta}{2}}p_n \rangle \to 0.
		\end{equation}
Since $\| A^{\frac{\beta}{2}} v_n \|=o(1),$ the boundedness of $\|A^{\frac{\beta}{2}} p_n\|$ and \eqref{qn5-1}, imply
$$\left|\lambda_n^2 \|p_n\|^2+k \langle A^{\frac{\beta}{2}}v_n, A^{\frac{\beta}{2}}p_n \rangle\right|\le\lambda_n^2 \|p_n\|^2+k\|A^{\frac{\beta}{2}}v_n\|\|A^{\frac{\beta}{2}}p_n\|\to0.$$
Then by \eqref{pn1}, we obtain $\|A^{\frac{1}{2}} p_n\|=o(1)$.
		
		From the above (1)-(4), we obtain $\|X_n\|_{\mathcal{H}}=o(1) $, which contradicts $\|X_n\|_{\mathcal{H}} = 1$.
		Therefore $i\mathbb{R} \subset \rho(\mathcal{A})$.

{\bf Second, we prove
\begin{equation}\label{secmain}
\limsup_{|\lambda| \to \infty} |\lambda|^{-{(4 - 2\beta - 2\alpha)}} \| (\mathrm{i}\lambda -\mathcal{A})^{-1} \| < \infty.
\end{equation}
}

By contradiction, if \eqref{secmain} is not correct, then there exists
		\[
		X_n = (v_n, u_n, p_n, q_n,\eta_n) \in D(\mathcal{A}),\quad \|X_n\|_\mathcal{H}= 1,
		\]
		and $\lambda_n \to \infty$ such that
		\begin{equation}\label{sj}
			\lambda_n^{4 - 2\beta - 2\alpha} (\mathrm{i}\lambda_n - \mathcal{A})X_n = (f_n^1, f_n^2, z_n^1, z_n^2, z_n^3) \to 0 \quad \text{in } \mathcal{H},
		\end{equation}
	specifically
		\begin{align}
			\lambda_n^{4 - 2\beta - 2\alpha} (\mathrm{i}\lambda_n v_n - u_n) &= f_n^1\to0 \quad \text{in } {D}({A}^{\frac{1}{2}}),\label{sj1} \\
			\lambda_n^{4 - 2\beta - 2\alpha} \left( \mathrm{i}\lambda_n u_n + \alpha_1 A v_n - k A p_n -\zeta A^\alpha v_n+ \int_0^\infty g(s) A^\alpha\eta_n \, \mathrm{d}s \right) &= f_n^2\to0 \quad \text{in } {H}, \label{sj2}\\
			\lambda_n^{4 - 2\beta - 2\alpha} (\mathrm{i}\lambda_n p_n - q_n) &= z_n^1\to0 \quad \text{in } {D}({A}^{\frac{1}{2}}), \label{sj3} \\
			\lambda_n^{4 - 2\beta - 2\alpha} \left( \mathrm{i}\lambda_n q_n + \alpha_2 A p_n - k A^\beta v_n \right) &= z_n^2\to0 \quad \text{in } {H},  \label{sj4}\\
			\lambda_n^{4 - 2\beta - 2\alpha} (\mathrm{i}\lambda_n \eta_n - u_n + \eta_{n,s}) &= z_n^3\to0 \quad \text{in } L_g^2.\label{sj5}
		\end{align}

{\bf \noindent(1) $\bm{\|\eta_n\|_{L_g^2}=|\lambda_n|^{\alpha + \beta - 2}o(1)}$.}
		By \eqref{dissipative}, \eqref{sj} and $\|X_n\|_{\mathcal{H}}=1$, we have
		\begin{align*}
			|\lambda_n|^{4 - 2\beta - 2\alpha} \int_0^\infty g'(s) \|A^{\frac{\alpha}{2}} \eta_n\|^2 ds
			&= |\lambda_n|^{4 - 2\beta - 2\alpha}  \left(-2\operatorname{Re} \langle i\lambda_n X_n -\mathcal{A} X_n, X_n \rangle\right) \\
			&= -2\operatorname{Re} \langle |\lambda_n|^{4 - 2\beta - 2\alpha} (i\lambda_n X_n - \mathcal{A} X_n), X_n \rangle \to 0,
		\end{align*}
which, together with the Assumption (A1), i.e.,
		\[
-|\lambda_n|^{4 - 2\beta - 2\alpha} \int_0^\infty g'(s) \|A^{\frac{\alpha}{2}} \eta_n\|^2 ds
		\geq k_1 |\lambda_n|^{4 - 2\beta - 2\alpha} \|\eta_n\|_{L_g^2}^2,
		\]
implies $|\lambda_n|^{4 - 2\beta - 2\alpha}\|\eta_n\|_{L_g^2}^2 =o(1)$,  i.e.,
		\begin{equation}\label{sj6}
			\|\eta_n\|_{L_g^2} =|\lambda_n|^{\alpha + \beta - 2} o(1).
		\end{equation}
		{\bf \noindent(2) $\bm{\|u_n\|=o(1)}$.} By \eqref{sj5}, we have
		\begin{equation}\label{un1}
			i\lambda_n \eta_n - u_n + \eta_{n,s} = \frac{z_n^3}{\lambda_n^{4 - 2\beta - 2\alpha}}.
		\end{equation}
Taking the $L_g^2$-inner product of \eqref{un1} with $u_n$, we obtain
		\begin{align}\label{un2}
			&\int_0^\infty i\lambda_n g(s)\langle A^{\frac{\alpha}{2}}\eta_n, A^{\frac{\alpha}{2}}u_n \rangle \, ds
			- \zeta\|A^{\frac{\alpha}{2}}u_n\|^2  \nonumber \\
			&+ \int_0^\infty g(s)\langle A^{\frac{\alpha}{2}}\eta_{n,s}, A^{\frac{\alpha}{2}}u_n \rangle \, ds
			= \frac{1}{\lambda_n^{4-2\beta-2\alpha}}
			\int_0^\infty g(s)\langle A^{\frac{\alpha}{2}}z_n^3, A^{\frac{\alpha}{2}}u_n \rangle \, ds.
		\end{align}
		For the first term on the left-hand side of \eqref{un2}, by applying H\"older's inequality, we obtain
\[\left| \int_0^\infty i\lambda_n g(s) \langle A^{\frac{\alpha}{2}} \eta_n, A^{\frac{\alpha}{2}} u_n \rangle ds \right|
			\leq |\lambda_n| \| A^{\frac{\alpha}{2}} u_n \| \sqrt{\zeta} \left\| \eta_n \right\|_{L_g^2}.\]
Then it follows from \eqref{sj6} that
		\begin{equation}\label{un3}
			\left| \int_0^\infty i\lambda_n g(s) \langle A^{\frac{\alpha}{2}} \eta_n, A^{\frac{\alpha}{2}} u_n \rangle ds \right|
			= |\lambda_n|^{\alpha + \beta - 1} \| A^{\frac{\alpha}{2}} u_n \| o(1).
		\end{equation}
		For the third term on the left-hand side of \eqref{un2}, by applying integration by parts, using assumption (A1), we obtain
\begin{align*}
\left| \int_0^\infty g(s) \langle A^{\frac{\alpha}{2}} \eta_{n,s}, A^{\frac{\alpha}{2}} u_n \rangle ds\right |
			&= \frac12\left|\int_0^\infty g'(s) \langle A^{\frac{\alpha}{2}} \eta_n, A^{\frac{\alpha}{2}} u_n \rangle ds \right|  \\
			&\leq \frac12k_0 \left| \int_0^\infty g(s) \langle A^{\frac{\alpha}{2}} \eta_n, A^{\frac{\alpha}{2}} u_n \rangle ds \right|  \\
			&\leq \frac12k_0 \| A^{\frac{\alpha}{2}} u_n \| \sqrt{\zeta} \left\| \eta_n \right\|_{L_g^2} .
\end{align*}
Then it follows from \eqref{sj6} that
		\begin{align}\label{un4}
		\left| \int_0^\infty g(s) \langle A^{\frac{\alpha}{2}} \eta_{n,s}, A^{\frac{\alpha}{2}} u_n \rangle ds\right |= |\lambda_n|^{\alpha+\beta-2}  \| A^{\frac{\alpha}{2}} u_n \| o(1)
		\end{align}
		 For the right-hand side of \eqref{un2}, by applying H\"older's inequality we obtain
\[\left| \frac{1}{\lambda_n^{4-2\beta-2\alpha}} \int_0^\infty g(s) \langle A^{\frac{\alpha}{2}} z_n^3, A^{\frac{\alpha}{2}} u_n \rangle ds \right|
			\le \frac{1}{|\lambda_n|^{4-2\beta-2\alpha}} \sqrt{\zeta} \| A^{\frac{\alpha}{2}} u_n \|\|z_n^3\|_{L_g^2}.\]
Since $\|z_n^3\|_{L_g^2}=o(1)$, it follows
		\begin{equation}\label{un5}
			\left| \frac{1}{\lambda_n^{4-2\beta-2\alpha}} \int_0^\infty g(s) \langle A^{\frac{\alpha}{2}} z_n^3, A^{\frac{\alpha}{2}} u_n \rangle ds \right|
			=\frac{1}{|\lambda_n|^{4-2\beta-2\alpha}}\| A^{\frac{\alpha}{2}} u_n \| o(1).
		\end{equation}
Since by \eqref{un2},
\begin{align*}
 \zeta\|A^{\frac{\alpha}{2}}u_n\|^2\le&\left| \int_0^\infty i\lambda_n g(s) \langle A^{\frac{\alpha}{2}} \eta_n, A^{\frac{\alpha}{2}} u_n \rangle ds \right|+\left| \int_0^\infty g(s) \langle A^{\frac{\alpha}{2}} \eta_{n,s}, A^{\frac{\alpha}{2}} u_n \rangle ds\right |\\
 &+\left| \frac{1}{\lambda_n^{4-2\beta-2\alpha}} \int_0^\infty g(s) \langle A^{\frac{\alpha}{2}} z_n^3, A^{\frac{\alpha}{2}} u_n \rangle ds \right|,
\end{align*}
it follows from \eqref{un3}-\eqref{un5} that
\[\|A^{\frac{\alpha}{2}}u_n\|^2=\left( |\lambda_n|^{\alpha + \beta - 1} +|\lambda_n|^{\alpha+\beta-2} +\frac{1}{|\lambda_n|^{4-2\beta-2\alpha}}\right)\| A^{\frac{\alpha}{2}} u_n \| o(1).\]
Noting $\lambda_n\to\infty$ and $\alpha,\beta\le1$, we obtain
		\begin{align}\label{un6}
			\| A^{\frac{\alpha}{2}} u_n \| = |\lambda_n|^{\alpha + \beta - 1} o(1).
		\end{align}
	Taking the $H$-inner product of \eqref{sj5} with $A^{\alpha-1}u_n$, then multiplying by $g(s)$ and integrating with respect to $s$ from $0$ to $\infty$ yields that
		\begin{align}\label{2un1}
			\zeta \| A^{\frac{\alpha-1}{2}} u_n \|^2
			&= -\int_0^\infty  g(s) \langle A^{\frac{\alpha-1}{2}} \eta_n, A^{\frac{\alpha-1}{2}}i \lambda_n u_n \rangle \, ds
			+ \int_0^\infty g(s) \langle A^{\frac{\alpha-1}{2}} \eta_{n,s}, A^{\frac{\alpha-1}{2}} u_n \rangle \, ds \nonumber \\
			&\quad - \frac{1}{\lambda_n^{4-2\beta-2\alpha}} \int_0^\infty g(s) \langle A^{\frac{\alpha-1}{2}} z_n^3, A^{\frac{\alpha-1}{2}} u_n \rangle \, ds.
		\end{align}
Since $\alpha<1$, $ D(A^{{\alpha}/{2}})\hookrightarrow D(A^{\frac{2\alpha-1}{2}})\hookrightarrow D(A^{\frac{\alpha-1}{2}})$ continuously, then there exists three  constants \( C_1 >0\) , \( C_2>0 \)  and $C_3>0$such that
		\[\|A^{\frac{2\alpha-1}{2}}w\| \le C_1\|A^{\frac{\alpha}{2}}w\|,\quad
		\|A^{\frac{\alpha-1}{2}}w\| \le C_2\|A^{\frac{\alpha}{2}}w\|,\quad
		\|A^{\frac{\alpha-1}{2}}w\| \le C_3\|w\|~~~\forall w\in D(A^{{\alpha}/{2}}).\]
		Then for the second and third terms on the right-hand side of \eqref{2un1}, by similar calculations of \eqref{un4} and \eqref{un5}, we obtain
		\begin{align}\label{2un2}
			\left|\int_0^\infty g(s)\langle A^{\frac{\alpha-1}{2}}\eta_{n.s}, A^{\frac{\alpha-1}{2}}u_n\rangle ds\right| +\left| \frac{1}{\lambda_n^{4-2\beta-2\alpha}}\int_0^\infty g(s)\langle A^{\frac{\alpha-1}{2}}z_n^3, A^{\frac{\alpha-1}{2}}u_n\rangle ds\right|\notag\\
=\left(|\lambda_n|^{\alpha +\beta -2}+|\lambda_n|^{2\alpha +2\beta -4}\right)\|A^{\frac{\alpha-1}{2}}u_n\|o(1).
		\end{align}
Since by \eqref{sj2},
		\[i\lambda_n u_n = -\alpha_1 A v_n + k A p_n + \zeta A^\alpha v_n - \int_0^\infty g(s) A^\alpha \eta_n ds + \frac{f_n^2}{\lambda_n^{4-2\beta-2\alpha}},\]
for the first terms on the right-hand side of \eqref{2un1}, we have
		\begin{align*}
			&\left| \int_0^\infty g(s) \langle
			A^{\frac{\alpha-1}{2}} \eta_n, A^{\frac{\alpha-1}{2}}
			i\lambda_n u_n \rangle ds \right|
			= \left| \int_0^\infty g(s) \langle A^{\alpha-1} \eta_n,
			i\lambda_n u_n \rangle ds \right| \nonumber \\
			&\leq \alpha_1 \left| \int_0^\infty g(s) \langle
			A^{\frac{\alpha}{2}} \eta_n, A^{\frac{\alpha}{2}} v_n
			\rangle ds \right|
			+ k \left| \int_0^\infty g(s) \langle
			A^{\frac{\alpha}{2}} \eta_n, A^{\frac{\alpha}{2}} p_n
			\rangle ds \right|  \\
			&\quad + \zeta \left| \int_0^\infty g(s) \langle
			A^{\frac{2\alpha-1}{2}} \eta_n, A^{\frac{2\alpha-1}{2}} v_n
			\rangle ds \right|
			+ \zeta \left| \int_0^\infty g(s) \|
			A^{\frac{2\alpha-1}{2}} \eta_n \|^2 ds \right|  \\
			&\quad + \frac{1}{|\lambda_n|^{4-2\beta-2\alpha}} \left|
			\int_0^\infty g(s) \langle A^{\frac{\alpha-1}{2}}
			\eta_n, A^{\frac{\alpha-1}{2}} f_n^2 \rangle ds \right|\\
&  \leq \alpha_1 \sqrt{\zeta}\| A^{\frac{\alpha}{2}} v_n \|  \|\eta\|_{L^2_g}
			+ k \sqrt{\zeta}\| A^{\frac{\alpha}{2}} p_n \| \|\eta\|_{L^2_g} + \zeta^{\frac{3}{2}} C_1^2 \| A^{\frac{\alpha}{2}} v_n \| \|\eta\|_{L^2_g}
			+ \zeta C_1^2\|\eta\|_{L^2_g}^2\notag \\
			& \quad + C_2C_3 |\lambda_n|^{2\alpha +2\beta -4} \sqrt{\zeta} \, \left\| f_n^2 \right\|\|\eta\|_{L^2_g}.
		\end{align*}
	Then by \eqref{sj6} and the boundedness of $\| A^{\frac{\alpha}{2}} v_n \|$, $\| A^{\frac{\alpha}{2}} p_n \|$ and $\|f_n^2\|$, we obtain
		\begin{align}\label{2un4}
	\left| \int_0^\infty g(s) \langle A^{\frac{\alpha-1}{2}} \eta_n, A^{\frac{\alpha-1}{2}} i\lambda_n u_n \rangle ds \right|=\left(|\lambda_n|^{\alpha +\beta -2}+|\lambda_n|^{2\alpha +2\beta -4}+|\lambda_n|^{3\alpha +3\beta -6}\right)o(1).
		\end{align}
Noting $\lambda_n\to\infty$ and $\alpha,\beta\le1$, 	from \eqref{2un1}, \eqref{2un2}, \eqref{2un4}, we obtain
		\begin{equation}\label{2un5}
			\|A^{\frac{\alpha-1}{2}}u_n\|=|\lambda_n|^{\alpha +\beta -2}o(1).
		\end{equation}
Since by virtue of the interpolation inequality,
\[\|u_n\| \le\|A^{\frac{\alpha}{2}}u_n\|^{1-\alpha}\|A^{\frac{\alpha-1}{2}}u_n\|^{\alpha},\]
it follows from \eqref{un6} and \eqref{2un5} that
\[\|u_n\|=\left(|\lambda_n|^{\alpha + \beta - 1} o(1)\right)^{1-\alpha}\left(|\lambda_n|^{\alpha +\beta -2}o(1)\right)^{\alpha}=|\lambda_n|^{\beta-1}o(1).\]
Since $\beta<1$ and $\lambda_n\to\infty$, we get $	\| u_n \|=o(1)$.

By similar proofs of the steps (3) and (4) in the proof of $i\mathbb{R}\subset\rho(\mathcal{A})$, we obtain
		\begin{equation}\label{sum}
			\| A^{\frac{1}{2}} v_n \|=o(1), \quad \| q_n \|=o(1), \quad \| A^{\frac{1}{2}} p_n \|=o(1).
		\end{equation}
		
		Summing up the above (1), (2), and \eqref{sum} , we have derived that $\|X_n\|_\mathcal{H}=o(1)$, which contradicts $\|X_n\|_\mathcal{H}= 1$. The proof is completed.
\end{proof}
	\section{Optimal decay rate}\label{sec4}
The main result of this section is the following theorem:
	\begin{theorem}\label{thmoptimaldecay}
If the memory kernel decays exponentially and $\alpha_1\neq\alpha_2$, then the polynomial decay rate established in Theorem \ref{thmdecay} is sharp.
\end{theorem}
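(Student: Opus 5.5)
We prove sharpness by spectral analysis. The aim is to exhibit a sequence of eigenvalues $\lambda_n$ of $\mathcal{A}$ that approach the imaginary axis at the rate
\[
|\operatorname{Re}\lambda_n|\sim c\,|\operatorname{Im}\lambda_n|^{-(4-2\beta-2\alpha)},\qquad c>0.
\]
Given such a sequence, Theorem \ref{thmBT} yields optimality. Indeed, for eigenvalues $\lambda_n$ with $\operatorname{Re}\lambda_n<0$ we have
\[
\|(i\operatorname{Im}\lambda_n-\mathcal{A})^{-1}\|\ge |\operatorname{Re}\lambda_n|^{-1}\ge c'|\operatorname{Im}\lambda_n|^{4-2\beta-2\alpha}.
\]
Suppose the energy decayed like $t^{-1/\omega'}$ for some $\omega'<4-2\beta-2\alpha$ for all data in $D(\mathcal{A})$. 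Then Theorem \ref{thmBT} would give $\|(i\lambda-\mathcal{A})^{-1}\|\le C|\lambda|^{\omega'}$, which contradicts the lower bound as $n\to\infty$.

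\emph{Reduction to a scalar characteristic equation.} We fix the exponential kernel $g(s)=g_0e^{-\mu s}$, which satisfies (A1), with $\zeta=g_0/\mu$. We look for eigenvectors of the separated form
\[
v=a e_n,\quad u=\lambda a e_n,\quad p=b e_n,\quad q=\lambda b e_n,\quad \eta(s)=c(s)e_n.
\]
The fifth component of $\mathcal{A}X=\lambda X$ reads $\lambda c=\lambda a-c'$ with $c(0)=0$, so $c(s)=a(1-e^{-\lambda s})$. For $\operatorname{Re}\lambda>-\mu$ this lies in $L^2_g$ and belongs to $\mathcal{M}$. Substituting gives
\[
\zeta A^\alpha v-\int_0^\infty g A^\alpha\eta\,ds=\frac{g_0\gamma_n^\alpha}{\lambda+\mu}\,a e_n.
\]
Writing $\gamma=\gamma_n$, the system therefore reduces to the characteristic equation
\[
\Big(\lambda^2+\alpha_1\gamma-\frac{g_0\gamma^\alpha}{\lambda+\mu}\Big)\big(\lambda^2+\alpha_2\gamma\big)=k^2\gamma^{1+\beta}.
\]
Every root with $\operatorname{Re}\lambda>-\mu$ is an eigenvalue of $\mathcal{A}$ with eigenvector in $D(\mathcal{A})$.

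\emph{Asymptotics of the magnetic branch.} We use $\alpha_1\ne\alpha_2$ to study the branch of roots near $\lambda=i\sqrt{\alpha_2\gamma}$. On this branch, $\lambda^2+\alpha_1\gamma\approx(\alpha_1-\alpha_2)\gamma$ does not vanish, so the equation can be solved for $\lambda^2+\alpha_2\gamma$. Expanding in powers of $\gamma^{-1/2}$ gives
\[
\lambda^2+\alpha_2\gamma=\frac{k^2\gamma^\beta}{\alpha_1-\alpha_2}\Big(1+\frac{g_0\gamma^{\alpha-3/2}}{(\alpha_1-\alpha_2)\,i\sqrt{\alpha_2}}+O(\gamma^{\beta-1})+O(\gamma^{\alpha-2})\Big).
\]
The first complex correction comes from the memory term, and the imaginary part of $\lambda^2$ is of order $\gamma^{\alpha+\beta-3/2}$. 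Writing $\lambda=i\sqrt{\alpha_2\gamma}+\varepsilon$, the perturbation satisfies $\varepsilon\approx\delta(\lambda^2)/(2i\sqrt{\alpha_2\gamma})$. This yields
\[
\operatorname{Re}\lambda=-\frac{g_0k^2}{2\alpha_2(\alpha_1-\alpha_2)^2}\gamma^{\alpha+\beta-2}(1+o(1)),
\]
and, since $|\lambda|\sim\sqrt{\alpha_2\gamma}$, this is $\operatorname{Re}\lambda\asymp -|\lambda|^{2\alpha+2\beta-4}$, exactly the required rate. When $\alpha_1=\alpha_2$ the factor $\lambda^2+\alpha_1\gamma$ is itself small on this branch, the two branches merge, and the balance changes. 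This explains why the hypothesis is needed.

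\emph{Main obstacle.} The hardest step is making the expansion rigorous. We must show that a genuine root exists in a small disc around the formal approximation, and that the coefficient of $\gamma^{\alpha+\beta-2}$ in $\operatorname{Re}\lambda$ is nonzero and dominates every error term, in particular the real parts coming from the $O(\gamma^{\beta-1})$ and $O(\gamma^{\alpha-2})$ corrections. To do this, we rewrite the characteristic equation as $F_\gamma(z)=0$ in the rescaled variable $z=\gamma^{2-\alpha-\beta}(\lambda-\lambda^{\rm approx})$. We then apply Rouché's theorem on a fixed small circle, comparing $F_\gamma$ with its linearization. This requires checking carefully that all discarded terms are $o(\gamma^{\alpha+\beta-2})$ in the $\lambda$ scale, using $\alpha,\beta<1$. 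Finally, we need $\operatorname{Re}\lambda<0$, which is consistent with the dissipativity \eqref{dissipative} and follows from the sign computed above.
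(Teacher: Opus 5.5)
Your proposal is correct in substance but takes a different route from the paper. The paper never locates eigenvalues of $\mathcal{A}$. Instead it evaluates the resolvent directly at the real frequency $\lambda_n=\sqrt{s_n^\pm}$, where $s_n^\pm$ is the root of $P_1(s)P_2(s)=k^2\gamma_n^{1+\beta}$ near $\alpha_2\gamma_n$: $s_n^+$ if $\alpha_1<\alpha_2$, $s_n^-$ if $\alpha_1>\alpha_2$, which is precisely your magnetic branch. It solves $(i\lambda_n-\mathcal{A})U_n=(0,0,0,-e_n,0)^\top$ in the single mode $e_n$. At this frequency the $2\times 2$ determinant collapses to $I_{\lambda_n}\gamma_n^\alpha P_2(\lambda_n^2)$, so one reads off $\|U_n\|_{\mathcal H}\ge\|q_n\|=\lambda_n|k_{2,n}|\ge c\,\lambda_n^{4-2\alpha-2\beta}$, and Theorem~\ref{thmBT} concludes. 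This is fully explicit and uses only $i\mathbb{R}\subset\rho(\mathcal{A})$, so no existence-of-roots argument is needed.

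Your approach gives more: actual eigenvalues with $\operatorname{Re}\lambda_n\sim-\frac{g_0k^2}{2\alpha_2(\alpha_1-\alpha_2)^2}\gamma_n^{\alpha+\beta-2}$. I checked this against your characteristic equation, and it is of the same order $|\lambda_n|^{2\alpha+2\beta-4}$ as the paper's bound. The price is the Rouch\'e step.

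When you carry that step out, do not center the disc at a truncated expansion. The imaginary part of the branch behaves like $\sqrt{\alpha_2\gamma}\,(1+\sum_j c_j\gamma^{j(\beta-1)})$. The number of terms needed before the error falls below $\gamma^{\alpha+\beta-2}$ is unbounded as $\beta\to1$, so with any fixed truncation the rescaled root leaves your fixed circle.

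Instead, start from the exact undamped root $\lambda_0=i\sqrt{s_n^\pm}$, which is the paper's frequency. With $D$ the characteristic function, you have $|D(\lambda_0)|=\bigl|\frac{g_0\gamma^\alpha}{\lambda_0+\mu}\bigr|\,|P_2(s_n^\pm)|\asymp\gamma^{\alpha+\beta-1/2}$ and $|D'(\lambda_0)|\asymp|\alpha_1-\alpha_2|\gamma^{3/2}$; this is where $\alpha_1\neq\alpha_2$ enters. Since also $D''=O(\gamma)$ nearby, Rouch\'e on the circle of radius $\epsilon\gamma^{\alpha+\beta-2}$ about $\lambda_0-D(\lambda_0)/D'(\lambda_0)$ gives exactly one root, with the real part you computed.

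One minor slip: $\eta(s)=a(1-e^{-\lambda s})e_n$ and $\eta_s$ lie in $L^2_g$ only for $\operatorname{Re}\lambda>-\mu/2$, not $-\mu$. This is harmless, since your roots satisfy $\operatorname{Re}\lambda\to0$.
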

\begin{remark}\label{rem-sharpness}
Theorems \ref{thmdecay} and \ref{thmoptimaldecay} reveal a clear distinction regarding the optimality of the derived polynomial decay rate, depending on the relation between the coefficients $\alpha_1$ and $\alpha_2$. In the case $\alpha_1 \neq \alpha_2$, the decay exponent $\frac{1}{4-2\beta-2\alpha}$ obtained in Theorem \ref{thmdecay} is sharp, meaning that no faster algebraic decay rate can be achieved for smooth initial data. Conversely, when $\alpha_1 = \alpha_2$, our analysis only establishes an upper bound for the decay index $\delta$, for which the estimate
\begin{equation}\label{remarkeq1}
 \|S(t)X_0\|_{\mathcal{H}} \leq C t^{-\delta} \|X_0\|_{D(\mathcal{A})}, \qquad \forall t \ge 1,
\end{equation}
holds with
\[
\delta \le \frac{1}{3-\beta-2\alpha}.
\]
This indicates that the polynomial decay rate $t^{-\frac{1}{4-2\beta-2\alpha}}$ from Theorem \ref{thmdecay} is no longer sharp under the constraint $\alpha_1=\alpha_2$. There are two plausible explanations for this loss of sharpness:
\begin{enumerate}
\item The true optimal decay rate in the symmetric case $\alpha_1=\alpha_2$ is actually governed by the exponent $\frac{1}{3-\beta-2\alpha}$ rather than $\frac{1}{4-2\beta-2\alpha}$;
\item The decay estimate $t^{-\frac{1}{4-2\beta-2\alpha}}$ remains sharp even in the symmetric setting, yet the technical tools employed in the present work are insufficient to confirm its optimality for this particular parameter configuration.
\end{enumerate}

We accordingly raise the following {\bf open problem} for future investigation:

For the abstract magnetizable piezoelectric system \eqref{modelmain} under assumptions (A1)--(A2) with identical stiffness coefficients $\alpha_1=\alpha_2$, determine the exact sharp polynomial decay rate of solutions from smooth initial data lying in $D(\mathcal{A})$. More precisely, clarify whether the optimal decay exponent equals $\dfrac{1}{3-\beta-2\alpha}$ or $\dfrac{1}{4-2\beta-2\alpha}$.
\end{remark}
	\begin{proof}[Proof of Theorem \ref{thmoptimaldecay}]
		Let $g(s)=g(0)e^{-\delta s}$ with $g(0)>0$ and $\delta>0$ be an exponential form. We have $Ae_n = \gamma_n e_n$, where $\{\gamma_n\}_{n\in\mathbb{N}}$ denote the eigenvalues of $A$ and $\{e_n\}_{n\in\mathbb{N}}$ the associated eigenfunctions. Moreover, $\gamma_n\to\infty$ and $\|e_n\|=1$ for all $n\in\mathbb{N}$.

Since we have shown in Section \ref{sec3} that $i\mathbb{R}\subset\rho(\mathcal{A})$, let $ U=(v,u,p,q,\eta)\in D(\mathcal{A})$ be given by $U=(i\lambda I-\mathcal{A})^{-1}(0,0,0,-e_n,0)^\top$, i.e.,
		\begin{align}
			i\lambda v - u &= 0,\label{op1} \\
			i\lambda u + \alpha_1 A v - k A p - \zeta A^\alpha v + \int_0^\infty g(s)A^\alpha \eta\,ds &= 0 \label{op2},\\
			i\lambda p - q &= 0 \label{op3},\\
			i\lambda q + \alpha_2 A p - k A^\beta v &= -e_n,\label{op4} \\
			i\lambda \eta + \eta_s \label{op5}&= u.
		\end{align}
		From \eqref{op1} and \eqref{op5}, we have
		\[
		\eta_s = -i\lambda \eta + u = -i\lambda \eta + i\lambda v.
		\]
		Since $\eta(0)=0$, solving this ordinary differential equation, we get
		\begin{equation}\label{etas}
			\eta(s)=\int_0^s e^{-i\lambda(s-t)}i\lambda v\,dt = v(1-e^{-i\lambda s}).
		\end{equation}
		By \eqref{op1} -\eqref{op4} and \eqref{etas}, we obtain
		\begin{align}
			\lambda^2 v - \alpha_1 A v + k A p + \int_0^\infty g(s)e^{-i\lambda s}ds\,A^\alpha v = 0,\label{lamda} \\
			\lambda^2 p - \alpha_2 A p + k A^\beta v = e_n.\label{lamda1}
		\end{align}
		Let
\begin{equation}\label{vp}
  v=k_1 e_n,~p=k_2 e_n,~~~ k_1,k_2\in\mathbb{C}.
\end{equation}
		Substituting \eqref{vp} into \eqref{lamda} and \eqref{lamda1}, we have
		\[
		\begin{cases}
			\left(\lambda^2-\alpha_1\gamma_n+\displaystyle\int_0^\infty g(s)e^{-i\lambda s}ds\,\gamma_n^\alpha\right)k_1 + k\gamma_n k_2 = 0,\\
			\left(\lambda^2-\alpha_2\gamma_n\right)k_2 + k\gamma_n^\beta k_1 = 1,
		\end{cases}
		\]
		therefore
		\begin{align}\label{lamdak}
			\begin{pmatrix}
				\lambda^2 - \alpha_1 \gamma_n + \displaystyle\int_0^\infty g(s) e^{-i\lambda s} ds\gamma_n^\alpha & k\gamma_n \\
				k\gamma_n^\beta & \lambda^2 - \alpha_2 \gamma_n
			\end{pmatrix}
			\begin{pmatrix}
				k_1 \\
				k_2
			\end{pmatrix}
			=
			\begin{pmatrix}
				0 \\
				1
			\end{pmatrix}.
		\end{align}
		Let $P_1(s)\triangleq s-\alpha_1\gamma_n$, $P_2(s)\triangleq s-\alpha_2\gamma_n$, and
		\begin{equation}\label{ilamda}
			I_\lambda\triangleq\int_0^\infty g(s)e^{-i\lambda s}ds
			=\int_0^\infty g(0)e^{-(\delta+i\lambda)s}ds
			=\frac{g(0)}{\delta+i\lambda}.
		\end{equation}
Let $P_1(s)P_2(s)-k^2\gamma_n^{\beta+1}=0$. We get
		\begin{align}\label{p1s}
		(s-\alpha_1\gamma_n)(s-\alpha_2\gamma_n)-k^2\gamma_n^{\beta+1} =s^2-(\alpha_1+\alpha_2)\gamma_n s+\alpha_1\alpha_2\gamma_n^2-k^2\gamma_n^{\beta+1}=0.
		\end{align}
		Solving equation \eqref{p1s} yields two solutions
		\begin{equation}\label{sn}
			s_n^\pm \triangleq \frac{(\alpha_1+\alpha_2)\gamma_n \pm \sqrt{(\alpha_1-\alpha_2)^2\gamma_n^2 + 4k^2\gamma_n^{\beta+1}}}{2}.
		\end{equation}
It is obvious $s_n^+>0$ and $s_n^->0$ for $n\gg1$ since $\beta<1$.

First we consider the case $\alpha_1<\alpha_2. $
Let $\lambda\triangleq\lambda_n=\sqrt{s_n^+}$, we get
\begin{align*}
	&\lim_{n\to\infty}\lambda_n|I_{\lambda_n}|=\lim_{n\to\infty}\frac{g(0)}{\sqrt{\frac{\delta^2}{\lambda_n^2}+1}}=g(0),\\
	&\lim_{n\to\infty}\frac{\lambda_n^2}{\gamma_n}=\lim_{n\to\infty}\frac{s_n^+}{\gamma_n}=\frac12\left(\alpha_1+\alpha_2+|\alpha_1-\alpha_2|\right)=\alpha_2,\\
	&\lim_{n\to\infty}\frac{p_1(\lambda_n^2)}{\gamma_n}=\lim_{n\to\infty}\frac{s_n^+}{\gamma_n}-\alpha_1=\alpha_2-\alpha_1,\\
	&\lim_{n\to\infty}\frac{p_2(\lambda_n^2)}{\gamma_n^\beta}=\lim_{n\to\infty}\frac{k^2\gamma_n}{p_1(\lambda_n^2)}=\frac{k^2}{\alpha_2-\alpha_1}.
\end{align*}
Since $P_1(s_n^+)P_2(s_n^+) - k^2 \gamma_n^{\beta+1}=0$,  from \eqref{lamdak}, we obtain
\begin{equation}\label{k2n}
	k_2\triangleq k_{2,n} = \frac{P_1(\lambda_n^2) + I_{\lambda_n} \gamma_n^\alpha}{ I_{\lambda_n} \gamma_n^\alpha P_2(\lambda_n^2) }.
\end{equation}
Since
\begin{align*}
	&\frac{\lambda_n^{2\alpha+2\beta-3}P_1(\lambda_n^2)}{ |I_{\lambda_n}| \gamma_n^\alpha P_2(\lambda_n^2) }-\frac{\lambda_n^{2\alpha+2\beta-3}}{P_2(\lambda_n^2) }\le\lambda_n^{2\alpha+2\beta-3}|k_{2,n}|\le\frac{\lambda_n^{2\alpha+2\beta-3}P_1(\lambda_n^2)}{ |I_{\lambda_n}| \gamma_n^\alpha P_2(\lambda_n^2) }+\frac{\lambda_n^{2\alpha+2\beta-3}}{P_2(\lambda_n^2) },\\
	&\lim_{n\to\infty}\frac{\lambda_n^{2\alpha+2\beta-3}P_1(\lambda_n^2)}{ |I_{\lambda_n}| \gamma_n^\alpha P_2(\lambda_n^2) }=\lim_{n\to\infty}\left(\frac{\lambda_n^2}{\gamma_n}\right)^{\alpha+\beta-1}\frac{P_1(\lambda_n^2)/\gamma_n}{\lambda_n|I_{\lambda_n}|P_2(\lambda_n^2)/\gamma_n^\beta}=\alpha_2^{\alpha+\beta-1}\frac{(\alpha_2-\alpha_1)^2}{g(0)k^2},\\
	&\lim_{n\to\infty}\frac{\lambda_n^{2\alpha+2\beta-3}}{P_2(\lambda_n^2)}=\lim_{n\to\infty}\frac{1}{P_2(\lambda_n^2)/\gamma_n^\beta}\left(\frac{\lambda_n^2}{\gamma_n}\right)^\beta\lambda_n^{2\alpha-3}=0,
\end{align*}
Then it follows
\begin{align*}
	\lim_{n\to\infty}\lambda_n^{2\alpha+2\beta-3}|k_{2,n}|=\alpha_2^{\alpha+\beta-1}\frac{(\alpha_2-\alpha_1)^2}{g(0)k^2}.
\end{align*}

By the definition of norm $\|\cdot\|_{\mathcal{H}}$ and \eqref{vp}, we have
\[
\|U_n\|_{\mathcal{H}}\ge\|q_n\|=\|i\lambda_n p_n\|=\lambda_n|k_{2,n}|\|e_n\|=\lambda_n|k_{2,n}|.
\]
Then we get
\begin{equation*}
	\liminf_{n\to\infty}\lambda_n^{2\alpha+2\beta-4}\|U_n\|_{\mathcal{H}}\ge \lim_{n\to\infty}\lambda_n^{2\alpha+2\beta-3}|k_{2,n}|=\alpha_2^{\alpha+\beta-1}\frac{(\alpha_2-\alpha_1)^2}{g(0)k^2}.
\end{equation*}
The above estimate and Theorem \ref{thmBT} imply if the polynomial decay rate is $t^{-\delta}$,  then $\delta\le\dfrac{1}{4-2\beta-2\alpha}.$

Second we consider the case $\alpha_1>\alpha_2. $
Let $\lambda\triangleq\lambda_n=\sqrt{s_n^-}$, we get
\begin{align*}
	&\lim_{n\to\infty}\lambda_n|I_{\lambda_n}|=\lim_{n\to\infty}\frac{g(0)}{\sqrt{\frac{\delta^2}{\lambda_n^2}+1}}=g(0),\\
	&\lim_{n\to\infty}\frac{\lambda_n^2}{\gamma_n}=\lim_{n\to\infty}\frac{s_n^-}{\gamma_n}=\frac12\left(\alpha_1+\alpha_2-|\alpha_1-\alpha_2|\right)=\alpha_2,\\
	&\lim_{n\to\infty}\frac{p_1(\lambda_n^2)}{\gamma_n}=\lim_{n\to\infty}\frac{s_n^-}{\gamma_n}-\alpha_1=\alpha_2-\alpha_1,\\
	&\lim_{n\to\infty}\frac{p_2(\lambda_n^2)}{\gamma_n^\beta}=\lim_{n\to\infty}\frac{k^2\gamma_n}{p_1(\lambda_n^2)}=\frac{k^2}{\alpha_2-\alpha_1}.
\end{align*}
According to the similar calculation as above, if the polynomial decay rate is $t^{-\delta}$, it also requires $\delta\le\dfrac{1}{4-2\beta-2\alpha}.$\end{proof}

\begin{proof}[Proof of \eqref{remarkeq1} of Remark \ref{rem-sharpness}] Since $\alpha_1=\alpha_2 $, let $\lambda\triangleq\lambda_n=\sqrt{s_n^+}$, we get
\begin{align*}
 s_n^+=\alpha_1\gamma_n+k\gamma_n^{(\beta+1)/2}=\alpha_2\gamma_n+k\gamma_n^{(\beta+1)/2},
\end{align*}
  which imply
\begin{align*}
	&\lim_{n\to\infty}\lambda_n|I_{\lambda_n}|=\lim_{n\to\infty}\frac{g(0)}{\sqrt{\frac{\delta^2}{\lambda_n^2}+1}}=g(0),\\
	&\lim_{n\to\infty}\frac{p_1(\lambda_n^2)}{\gamma_n^{(\beta+1)/2}}=\lim_{n\to\infty}\frac{p_2(\lambda_n^2)}{\gamma_n^{(\beta+1)/2}}=k.
\end{align*}
Since $P_1(s_n^+)P_2(s_n^+) - k^2 \gamma_n^{\beta+1}=0$,  from \eqref{lamdak}, we obtain
\begin{equation}\label{k1n}
	k_1 \triangleq k_{1,n} = \frac{-k \gamma_n}{ I_{\lambda_n} \gamma_n^\alpha P_2(\lambda_n^2) }.
\end{equation}
Then it follows
\begin{align*}
	\lim_{n\to\infty} \lambda_n^{2\alpha+\beta-2} |k_{1,n}|
	&= \lim_{n\to\infty} \lambda_n^{2\alpha+\beta-2} \frac{k\gamma_n}{\left|I_{\lambda_n}\right| \gamma_n^\alpha P_2(\lambda_n^2)} \\
	&= \lim_{n\to\infty} \frac{k}{\lambda_n \left|I_{\lambda_n}\right| P_2(\lambda_n^2) /\gamma_n^{\frac{\beta+1}{2}}} \\
	&= \frac{k}{g(0)k}
	= \frac{1}{g(0)}.
\end{align*}
By the definition of norm $\|\cdot\|_{\mathcal{H}}$ and \eqref{vp}, we have
\[
\|U_n\|_{\mathcal{H}}\ge\|u_n\|=\|i\lambda_n v_n\|=\lambda_n|k_{1,n}|\|e_n\|=\lambda_n|k_{1,n}|.
\]
Then we get
\begin{equation*}
	\liminf_{n\to\infty}\lambda_n^{2\alpha+\beta-3}\|U_n\|_{\mathcal{H}}\ge \lim_{n\to\infty}\lambda_n^{2\alpha+\beta-2}|k_{1,n}|=\frac{1}{g(0)}.
\end{equation*}
The above estimate and Theorem \ref{thmBT} imply if the polynomial decay rate is $t^{-\delta}$,  then $\delta\le\dfrac{1}{3-\beta-2\alpha}.$
	\end{proof}


\end{document}